\newtheorem{thm}{Theorem}
\newtheorem{cor}{Corollary}
\newtheorem{prop}{Proposition}
\newtheorem*{rem}{Remark}
\newtheorem*{defi}{Definition}
\newtheorem{lem}{Lemma}
\begin{document}

    \title{Dynamical systems of simplices in dimension $2$ or $3$}
    \author{GERALD BOURGEOIS, Sebastien ORANGE}
    \date{\today}
    
    \address{Bourgeois Gerald, French Polynesia University, BP 6570, 98702 FAA'A, Tahiti, French Polynesia.}
    \email{bourgeois.gerald@gmail.com}
    \address{S\'ebastien Orange, Le Havre University, 25 rue Philippe Lebon, 76600 Le Havre, FRANCE.}
    \email{Sebastien.Orange@lip6.fr}
    
    \subjclass[2000]{Primary 51F, 13P10, 37B}
    \keywords{}

\begin{abstract}
Let $\mathcal{T}_0=(A_0^0\cdots{A_0^d})$ be a $d$-simplex, $G_0$ its centroid, $\mathcal{S}$ its circumsphere, $O$ the center of $\mathcal{S}$. Let $(A_1^i)$ be the points where $\mathcal{S}$ intersects the lines $(G_0A_0^i)$, $\mathcal{T}_1$ the $d$-simplex $(A_1^0\cdots{A_1^d})$, and $G_1$ its centroid. By iterating this construction, a dynamical system of $d$-simplices $(\mathcal{T}_i)$ with centroids $(G_i)$ is constructed. For $d=2$ or $3$, we prove that the sequence $(OG_i)_i$ is decreasing and tends to $0$. We consider the sequences $(\mathcal{T}_{2i})_i$ and  $(\mathcal{T}_{2i+1})_i$; for $d=2$ they converge to two equilateral triangles with at least quadratic speed; for $d=3$ they converge to two isosceles tetrahedra with at least geometric speed. In this last case, we give an explicit expression of the lengths of the edges of the limit form. We show also that if $\mathcal{T}_0$ is a planar cyclic quadrilateral then $(\mathcal{T}_n)$ converges to a rectangle with at least geometric speed or eventually to a square with a speed that is conjectured as cubic. The proofs are largely algebraic and use Gr\"obner basis computations.
\end{abstract}

\maketitle
%//////////////////////////////////////////////////////////////////////////////////////////////////////////////////////////////////////////////////
    \section{Introduction}
    \subsection{The general problem }
    Let $\mathcal{T}_0=(A_0^0\cdots{A_0^d})$ be a $d$-simplex, $G_0$ its centroid, $\mathcal{S}$ its circumsphere in $\mathbb{R}^d$, $O$ the center of $\mathcal{S}$. Let $(A_1^i)$ be the points where $\mathcal{S}$ intersects the lines $(G_0A_0^i)$ respectively, $\mathcal{T}_1$ the $d$-simplex $(A_1^0\cdots{A_1^d})$, and $G_1$ its centroid. If we iterate this construction then we produce a dynamical system of $d$-simplices $(\mathcal{T}_i)$ with centroids $(G_i)$.
    
    Let $M,N$ be two points of $\mathbb{R}^d$; $MN$ refers to the euclidean norm of the vector $\overrightarrow{MN}$.

Numerical investigations, using Maple, with thousands of random
simplices in dimensions up to 20 indicate that:
\begin{enumerate}[i)]
 \item the sequence $(OG_i)_i$ is decreasing and tends to $0$ (from~\cite{10}).
 \item the sequences $(\mathcal{T}_{2i})_i$ and  $(\mathcal{T}_{2i+1})_i$ converge to two $d$-simplices with centroid $O$.
\end{enumerate}

\begin {rem}
The condition ``circumcenter=centroid'' for a $d$-simplex is equivalent
to the statement that the sum of the squares of the edge lengths of
each facet is the same for all $d+1$ facets.
\end {rem}
We prove the assertions i) and ii) in the cases $d=2$ and $d=3$. We conjecture that the result is valid for any $d$.\\
From now on, we assume that the radius of $\mathcal{S}$ is $1$.    
\subsection{The case $d=3$}
$\mathcal{T}_i=(A_iB_iC_iD_i)$ is a tetrahedron such that its four vertices are not coplanar. These vertices are ordered and $\mathcal{T}_i$ is viewed as a point of $\mathcal{S}^4$\\
Let $\phi$ and $\psi$  be the functions which transform $\mathcal{T}_0$ into $\mathcal{T}_1$ and $G_1$. 
\begin{rem} \begin{enumerate}[i)]
             \item $\phi$ does not admit any non planar tetrahedron as fixed point.
             \item If $\mathcal{T}$ is an isosceles tetrahedron then $\mathcal{T}$ is a fixed point of $\phi\circ\phi$.
            \end{enumerate}
\end{rem}

Our main result is the following:

\indent The sequences $(\mathcal{T}_{2i})_{i\in\mathbb{N}}$ and $(\mathcal{T}_{2i+1})_{i\in\mathbb{N}}$ are well defined and converge, with at least geometric speed, to two non planar isosceles tetrahedra that are symmetric with respect to $O$.\\
\indent We consider also the degenerate case where $\mathcal{T}_0$ is a planar cyclic convex quadrilateral; then the sequence $(\mathcal{T}_{i})_i$ converges, with at least geometric speed, to a rectangle. If $\mathcal{T}_0$ is a harmonic quadrilateral then the limit form is a square and we conjecture that the convergence is with order three. \\
\indent Moreover, in both cases, we give explicit expressions for the lengths of the edges of the limit form from the ones of $\mathcal{T}_0$.

\subsection{The case $d=2$}
$\mathcal{T}_i=(A_iB_iC_i)$ is a triangle such that its vertices are pairwise distinct.

We prove that the sequences $(\mathcal{T}_{2i})_{i\in\mathbb{N}}$ and $(\mathcal{T}_{2i+1})_{i\in\mathbb{N}}$ are well defined and converge to two equilateral triangles that are symmetric with respect to $O$.
   Moreover, these sequences converge with at least quadratic speed.\\
 \indent Thus if the dimension is two, it can be observed a much more quick convergence that in the dimension three case.
 
\subsection{Method used}

Along this paper, we use rational functions of the square of the lengths of the edges of the $d$-simplices and naturally some systems of polynomial equations appear. Computations with such systems requires Gr\"obner basis softwares. For this, we have used the J. C. Faugere's software ``FGb'' (see~\cite{8ter}) and the computer algebra system \textsc{Maple} (see~\cite{Maple}) which provides some other tools we needed.

%//////////////////////////////////////////////////////////////////////////////////////////////////////////////////////////////////////////////////
 \section {Standard definitions and results about tetrahedra.}
 
 \subsection{General tetrahedra}  
 
 Let $\mathcal{T}=(ABCD)$ be a tetrahedron, $G$ its centroid and $\mathcal{V}(\mathcal{T})$ its volume. We assume $A,B,C,D$ are not coplanar and the circumsphere of $\mathcal{T}$ has center $O$ and radius $1$. Let $BC=a$, $CA=b$, $AB=c$, $AD=a'$, $BD=b'$, $CD=c'$.

\begin{center}
\textit{Remark that the 3-tuple $(a,b,c)$ does not play the same role as $(a',b',c')$.}\end{center}

\begin{prop}\label{first_equality}  For all $\mathcal{T}$, we have
\begin{enumerate}[i)]
 \item $O{G^2}=1-\dfrac{1}{16}(a^2+b^2+c^2+a'^2+b'^2+c'^2)\\\phantom{O{G^2}}=1-\dfrac{1}{4}(GA^2+GB^2+GC^2+GD^2)$. (See~\cite{2} p. 64 and ii)).
 \item  $GA^2=\dfrac{3}{16}(a'^2+b^2+c^2)-\dfrac{1}{16}(a^2+b'^2+c'^2),\\ GB^2=\dfrac{3}{16}(b'^2+c^2+a^2)-\dfrac{1}{16}(b^2+c'^2+a'^2),\\ GC^2=\dfrac{3}{16}(c'^2+a^2+b^2)-\dfrac{1}{16}(c^2+a'^2+b'^2),\\GD^2=\dfrac{3}{16}(a'^2+b'^2+c'^2)-\dfrac{1}{16}(a^2+b^2+c^2)$. (see~\cite{4} p. xv).
\end{enumerate}
\end{prop}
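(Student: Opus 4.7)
The plan is to derive both assertions from two classical identities for a finite point set and its centroid. The first is Leibniz's relation
$$\sum_{i=1}^n PA_i^2 \;=\; n\cdot PG^2 \,+\, \sum_{i=1}^n GA_i^2 \qquad (\forall P \in \mathbb{R}^3),$$
proved by writing $\overrightarrow{PA_i} = \overrightarrow{PG}+\overrightarrow{GA_i}$, squaring, summing over $i$, and using $\sum_i \overrightarrow{GA_i}=\vec{0}$. The second is the Lagrange identity
$$\sum_{i=1}^n GA_i^2 \;=\; \frac{1}{n}\sum_{i<j} A_iA_j^2,$$
which falls out by applying Leibniz successively with $P=A_1,\dots,A_n$ and summing the resulting equalities.

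For part (i), I specialize Leibniz to $n=4$ and $P=O$. Since $OA=OB=OC=OD=1$, the left-hand side equals $4$, giving directly the second form $OG^2 = 1-\tfrac{1}{4}(GA^2+GB^2+GC^2+GD^2)$. Substituting the Lagrange identity then yields the first form $OG^2 = 1-\tfrac{1}{16}(a^2+b^2+c^2+a'^2+b'^2+c'^2)$.

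For part (ii), the starting point is the identity $4\,\overrightarrow{GA} = \overrightarrow{BA}+\overrightarrow{CA}+\overrightarrow{DA}$, which follows at once from $G=\tfrac14(A+B+C+D)$. Squaring and applying the polarization identity $2\,\overrightarrow{XY}\cdot\overrightarrow{XZ}=XY^2+XZ^2-YZ^2$ to each of the three cross terms rewrites them as $(c^2+b^2-a^2)+(c^2+a'^2-b'^2)+(b^2+a'^2-c'^2)$; adding the diagonal contribution $BA^2+CA^2+DA^2=c^2+b^2+a'^2$ and dividing by $16$ gives exactly $GA^2=\tfrac{1}{16}[3(a'^2+b^2+c^2)-(a^2+b'^2+c'^2)]$. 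The formulas for $GB^2$ and $GC^2$ follow by cyclically permuting the triple $(A,B,C)$ (with $(a,b,c)$ and $(a',b',c')$ permuted in parallel), while the formula for $GD^2$ is obtained by swapping the primed and unprimed triples, reflecting the asymmetric convention in which the primed lengths are precisely the three edges incident to $D$.

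There is no substantive obstacle: Leibniz, Lagrange and polarization are textbook material, and the computation in (ii) is pure bookkeeping. The only subtlety is to respect the asymmetric labeling of edges when reading off the three remaining cases from the one for $GA^2$; as a final sanity check, summing the four expressions of (ii) reproduces the Lagrange identity used in (i), since each edge length appears once with coefficient $3$ and three times with coefficient $-1$, giving coefficient $0$ on each side of the asymmetry except for the total $4\cdot\tfrac{1}{4}(a^2+\cdots+c'^2)$ required.
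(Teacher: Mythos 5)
Your proof is correct and complete. Note that the paper itself gives no proof of this proposition at all --- it simply points to the cited references --- so your self-contained derivation via the Leibniz relation, the Lagrange identity, and the polarization identity is an addition rather than a deviation; these are indeed the standard tools for such median-length formulas. Your computation for $GA^2$ checks out: $16\,GA^2=(c^2+b^2+a'^2)+(2c^2+2b^2+2a'^2-a^2-b'^2-c'^2)=3(a'^2+b^2+c^2)-(a^2+b'^2+c'^2)$, and the other three cases follow correctly once one respects the convention that $a',b',c'$ are the edges through $D$. The only blemish is in your closing sanity check: in the sum of the four formulas of (ii), each squared edge length appears \emph{twice} with coefficient $\tfrac{3}{16}$ (once for each of its two endpoints) and twice with coefficient $-\tfrac{1}{16}$, giving total coefficient $\tfrac{3+3-1-1}{16}=\tfrac14$ and hence the Lagrange identity; your phrasing ``once with coefficient $3$ and three times with coefficient $-1$'' would instead give total coefficient $0$. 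This slip is confined to the parenthetical check and does not affect the validity of the argument.
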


If $u,v,w\in\mathbb{R}^3$ then $Gram(u,v,w)$ refers to the Gram matrix of these three vectors. If $U$ is a square matrix then $det(U)$ refers to its determinant.
\begin{prop}\label{second_equality}
Let $\Gamma(A,B,C,D)=det\begin{pmatrix}0&1&1&1&1\\1&0&c^2&b^2&a'^2\\1&c^2&0&a^2&b'^2\\1&b^2&a^2&0&c'^2\\1&a'^2&b'^2&c'^2&0\end{pmatrix}$ be the Cayley-Menger determinant, and let $\Delta(A,B,C,D)=det\begin{pmatrix}0&c^2&b^2&a'^2\\c^2&0&a^2&b'^2\\b^2&a^2&0&c'^2\\a'^2&b'^2&c'^2&0\end{pmatrix}$.

With these notations, we have
\begin{enumerate}[i)]
 \item The volume as a function of $a,b,c,a',b',c'$ (see~\cite{6} p. 168):\\
$288\times\mathcal{V}(\mathcal{T})^2=8\times{det}({Gram(\stackrel{\rightarrow}{AB},\stackrel{\rightarrow}{AC},\stackrel{\rightarrow}{AD})})=\Gamma(A,B,C,D)$.\\
 Therefore, $A,B,C,D$ are not coplanar if and only if $\Gamma(A,B,C,D)>0$.
 \item Let $R$ be the circumradius of the tetrahedron (here $R=1$).
 \begin{enumerate}[a)]
  \item  Crelle's formula (1821): $6R\,\mathcal{V}(\mathcal{T})=\delta$ where $\delta$ denotes the area of the triangle with sides of lengths $aa',bb',cc'$  (see~\cite{2} p. 250) 
  \item $R^2=-\dfrac{\Delta(A,B,C,D)}{2\Gamma(A,B,C,D)}$ (see~\cite{6} p. 168).
 \end{enumerate}
 \item There exists a relation between $a,b,c,a',b',c'$:  $det\begin{pmatrix}1/2&1&1&1&1\\1&0&c^2&b^2&a'^2\\1&c^2&0&a^2&b'^2\\1&b^2&a^2&0&c'^2\\1&a'^2&b'^2&c'^2&0\end{pmatrix}=0.$
\end{enumerate}

\end{prop}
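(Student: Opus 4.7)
The plan is to handle the three parts in order, starting from standard volume computations and ending with a derivation of part iii) from part ii).

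For part i), I would begin with the elementary fact that $6\mathcal{V}(\mathcal{T})=|\det(\overrightarrow{AB},\overrightarrow{AC},\overrightarrow{AD})|$, so squaring yields
\[
36\mathcal{V}(\mathcal{T})^2=\det(Gram(\overrightarrow{AB},\overrightarrow{AC},\overrightarrow{AD})),
\]
which gives the middle equality after multiplying by $8$. To connect this to the Cayley--Menger determinant $\Gamma$, I would use the polarization identity $\overrightarrow{AB}\cdot\overrightarrow{AC}=\tfrac12(c^2+b^2-a^2)$ to rewrite every Gram entry in terms of squared edge lengths, and then perform the standard row/column operations on the $5\times5$ bordered matrix (subtract multiples of the first row from the other rows, and symmetrically for columns) to reduce $\Gamma$ to a $3\times3$ block proportional to $8\,\det(Gram)$. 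The non-coplanarity characterization follows because $Gram$ is the Gram matrix of the vectors emanating from $A$, which is positive definite exactly when those vectors are linearly independent.

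Part ii) consists of two classical facts that I would essentially import. Crelle's formula can be proved by placing $O$ at the origin so that $A,B,C,D$ are unit vectors, and then checking that $|\overrightarrow{AB}\times\overrightarrow{CD}|^2$, $|\overrightarrow{AC}\times\overrightarrow{BD}|^2$, $|\overrightarrow{AD}\times\overrightarrow{BC}|^2$ are the squared sides $a^2a'^2,b^2b'^2,c^2c'^2$ of a triangle whose area $\delta$ satisfies $6R\mathcal{V}(\mathcal{T})=\delta$ after a short vector-identity calculation; alternatively, one simply invokes the reference. For $R^2=-\Delta/(2\Gamma)$, the cleanest route is the Cayley--Menger formalism for $5$ cospherical points in $\mathbb{R}^3$: the $6\times6$ Cayley--Menger determinant of $\{O,A,B,C,D\}$ vanishes, and expanding it after substituting $OA=OB=OC=OD=R$ produces a linear equation in $R^2$ with coefficients $\Gamma$ and $\Delta$.

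Part iii) is then immediate from part ii)b). I would observe that the matrix in iii) differs from the Cayley--Menger matrix only in its $(1,1)$ entry ($1/2$ instead of $0$). Using multilinearity in the first row and cofactor expansion,
\[
\det\begin{pmatrix}1/2&1&1&1&1\\1&0&c^2&b^2&a'^2\\1&c^2&0&a^2&b'^2\\1&b^2&a^2&0&c'^2\\1&a'^2&b'^2&c'^2&0\end{pmatrix}=\Gamma(A,B,C,D)+\tfrac12\,\Delta(A,B,C,D),
\]
and the hypothesis $R=1$ together with part ii)b) gives $\Delta=-2\Gamma$, so the right-hand side vanishes. The main subtlety to watch here is just the sign of the cofactor and the correspondence between the $\Delta$ block and the minor obtained by deleting the first row and column.

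Overall I do not expect real obstacles; the only nontrivial ingredient is Crelle's formula, whose slickest derivation uses the cross-product identity above, and this is the step I would be most careful with if a self-contained argument were required rather than a citation.
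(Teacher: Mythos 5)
Your treatment of iii) is exactly the paper's proof: the paper's entire argument consists of noting that the determinant in iii) equals $\Gamma(A,B,C,D)+\tfrac12\Delta(A,B,C,D)$ (linearity in the first row, the $(1,1)$ cofactor being $\Delta$) and that ii)b) with $R=1$ forces this sum to vanish. Parts i) and ii) are not proved in the paper at all --- they are imported from the cited references --- so your sketches go beyond what the paper does; the Gram/Cayley--Menger reduction for i) and the vanishing $6\times6$ Cayley--Menger determinant of the five cospherical points $O,A,B,C,D$ for ii)b) are both standard and would work. One concrete error, though: the vector identity you propose for Crelle's formula is false. In general $|\overrightarrow{AB}\times\overrightarrow{CD}|=AB\cdot CD\cdot\sin\theta$, where $\theta$ is the angle between the opposite edges $AB$ and $CD$, and $\theta$ need not be $\pi/2$; e.g.\ for the unit vectors $A=(1,0,0)$, $B=(0,1,0)$, $C=(-1,0,0)$, $D=(0,0,1)$ one finds $|\overrightarrow{AB}\times\overrightarrow{CD}|=\sqrt{3}$ while $AB\cdot CD=2$. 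The usual self-contained route to Crelle is instead an inversion centred at a vertex, which maps the opposite face to a triangle with sides proportional to $aa'$, $bb'$, $cc'$. Since you also offer the citation as a fallback --- which is all the paper itself does --- this slip does not affect the proposition, but the cross-product derivation as stated would fail.
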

\begin {proof}
iii) comes from ii)b) and $R=1$: $\Gamma(A,B,C,D)+\dfrac{1}{2}\Delta(A,B,C,D)=0$.
\end{proof}
\begin{prop}\label{algebraic_quantities}
 We assume $\mathcal{T}$ is a non planar tetrahedron or a cyclic quadrilateral.\\
  The expression $Pt(\mathcal{T})=-\Delta(A,B,C,D)$ has the following properties:
 \begin{enumerate}[i)]
  \item $Pt(\mathcal{T})=(bb'+cc'-aa')(cc'+aa'-bb')(aa'+bb'-cc')(aa'+bb'+cc')\\ \phantom{Pt(\mathcal{T})}=
2a^2a'^2b^2b'^2+2b^2b'^2c^2c'^2+2c^2c'^2a^2a'^2-a^4a'^4-b^4b'^4-c^4c'^4$.
\item If $\mathcal{T}$ is not planar then each factor of the product appearing in i) is positive (Ptolemy's inequality, see~\cite{7}). (see also~\cite {5} p.549,555).\\
 Moreover $Pt(\mathcal{T})=576\times{\mathcal{V}(\mathcal{T})}^2\times{R^2}$ (From Proposition~\ref{second_equality}). 
\item  If $\mathcal{T}$ is a cyclic quadrilateral then $Pt(\mathcal{T})=0$. 
 \end{enumerate}
 \end{prop}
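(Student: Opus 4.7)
The plan is to handle the three parts in order, exploiting the results already assembled in Propositions~\ref{first_equality} and~\ref{second_equality}.

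For part i), I first expand the $4\times 4$ determinant $\Delta(A,B,C,D)$ directly: expansion along the first row is tractable because all diagonal entries vanish and the resulting $3\times 3$ minors share structure, so after collecting terms I expect to arrive at
\[
-\Delta = 2a^2a'^2b^2b'^2+2b^2b'^2c^2c'^2+2c^2c'^2a^2a'^2-a^4a'^4-b^4b'^4-c^4c'^4.
\]
Setting $x=aa'$, $y=bb'$, $z=cc'$ converts the right-hand side into the classical Heron discriminant $2x^2y^2+2y^2z^2+2z^2x^2-x^4-y^4-z^4$, which factors as $(x+y+z)(-x+y+z)(x-y+z)(x+y-z)$. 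Reversing the substitution yields the product displayed in i).

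For part ii), the equality $Pt(\mathcal{T})=576\,\mathcal{V}(\mathcal{T})^2 R^2$ is immediate from Proposition~\ref{second_equality}: combining $R^2=-\Delta/(2\Gamma)$ with $\Gamma=288\mathcal{V}(\mathcal{T})^2$ yields $Pt(\mathcal{T})=-\Delta=2R^2\Gamma=576\,R^2\mathcal{V}(\mathcal{T})^2$. For the positivity of each factor, I invoke Crelle's formula: the quantity $\delta=6R\,\mathcal{V}(\mathcal{T})$ is the area of an actual triangle of side lengths $aa'$, $bb'$, $cc'$; since $\mathcal{T}$ is non-planar, $\mathcal{V}(\mathcal{T})>0$, hence this triangle is non-degenerate, so the three triangle-inequality factors $-aa'+bb'+cc'$, $aa'-bb'+cc'$, $aa'+bb'-cc'$ are strictly positive, while the fourth factor $aa'+bb'+cc'$ is trivially so.

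For part iii), I label the vertices of the cyclic quadrilateral so that the sides in cyclic order correspond to $c=AB$, $a=BC$, $c'=CD$, $a'=DA$ and the diagonals to $b=AC$ and $b'=BD$; Ptolemy's equality $AC\cdot BD = AB\cdot CD + BC\cdot DA$ becomes $bb'=cc'+aa'$, so the factor $(cc'+aa'-bb')$ vanishes and $Pt(\mathcal{T})=0$. The only real obstacle in the whole argument is the polynomial expansion of $\Delta$ in part i): it is routine but error-prone, and in practice I would verify it with the computer algebra system used throughout the paper rather than carry out the expansion by hand.
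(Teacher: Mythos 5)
Your argument is correct, and it is worth noting that the paper itself offers no proof of this proposition at all: the three assertions are stated with pointers to the literature (the determinant identity, Ptolemy's inequality for ii), and the remark that the formula $Pt(\mathcal{T})=576\,\mathcal{V}(\mathcal{T})^2R^2$ follows from Proposition~\ref{second_equality}), so your proposal supplies a self-contained argument where the paper merely cites. Your derivation of $Pt(\mathcal{T})=576\,\mathcal{V}(\mathcal{T})^2R^2$ by combining $R^2=-\Delta/(2\Gamma)$ with $\Gamma=288\,\mathcal{V}(\mathcal{T})^2$ is exactly the route the paper indicates. Where you genuinely diverge is in the positivity claim of ii): the paper appeals to Ptolemy's inequality, whereas you deduce it from Crelle's formula --- since $aa'$, $bb'$, $cc'$ are the side lengths of a triangle of area $6R\,\mathcal{V}(\mathcal{T})>0$, that triangle is nondegenerate and the three strict triangle inequalities follow. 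This is more economical than arguing from the sign of the product alone (which would additionally require ruling out two simultaneously negative factors, using that the sum of any two of them is $2aa'$, $2bb'$ or $2cc'$). For iii), your appeal to Ptolemy's equality is the standard argument; you should just make explicit that the labelling is harmless: $Pt(\mathcal{T})$ is a symmetric function of the three opposite-edge products $aa'$, $bb'$, $cc'$, and whichever of these pairs plays the role of the diagonals of the convex cyclic quadrilateral, Ptolemy's equality annihilates the corresponding factor. Finally, the expansion of $\Delta$ in i) is a finite polynomial identity, and verifying it by computer algebra, as you propose, is entirely in keeping with the methodology of the paper; the reduction to the Heron discriminant via $x=aa'$, $y=bb'$, $z=cc'$ and its classical factorization then completes i).
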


Conversely let $a,b,c,a',b',c'$ be six positive reals. 
% Does there exist a tetrahedron, the edges of which have these lengths ?\\
 Does there exist a tetrahedron such that the lengths of its edges are these reals?

\begin{prop}\label{nsc_tetrahedron}
The necessary and sufficient condition that the six lengths form a tetrahedron (eventually planar) appears to be:
 \begin{enumerate}[i)]
  \item  $a,b,c$ are the lengths of a triangle that is $16\times{S^2}=2a^2b^2+2b^2c^2+2c^2a^2-a^4-b^4-c^4\geq{0}$ (then $S$ is the area of this triangle).\
\item $det\begin{pmatrix}0&1&1&1&1\\1&0&c^2&b^2&a'^2\\1&c^2&0&a^2&b'^2\\1&b^2&a^2&0&c'^2\\1&a'^2&b'^2&c'^2&0\end{pmatrix}\geq{0}$ 
 \end{enumerate}

\end{prop}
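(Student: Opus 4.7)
The plan is to prove the two directions separately.

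\textbf{Necessity.} If the six lengths are realized by a (possibly planar) tetrahedron $ABCD$, then $a,b,c$ are the sides of its face $ABC$, so condition i) is just the triangle inequality for that face. Condition ii) follows immediately from Proposition~\ref{second_equality}~i), since the Cayley--Menger determinant there equals $288\,\mathcal{V}(\mathcal{T})^2\geq 0$, with equality exactly when the four points are coplanar.

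\textbf{Sufficiency.} The natural idea is to build the four points explicitly. I would first use condition i) to realize the triangle $ABC$ in the plane $z=0\subset\mathbb{R}^3$, choosing
\begin{equation*}
A=(0,0,0),\qquad B=(c,0,0),\qquad C=(x_C,y_C,0),\quad y_C\geq 0,
\end{equation*}
with $x_C,y_C$ determined from $AC=b$ and $BC=a$; note that $c\,y_C=2S$, so $y_C>0$ precisely when $S>0$. Next I would look for $D=(x,y,z)$ satisfying $AD=a'$, $BD=b'$, $CD=c'$. Subtracting the three sphere equations pairwise yields a linear system in $(x,y)$ that has a unique solution whenever $S>0$, and the remaining constraint reads $z^2=a'^2-x^2-y^2$. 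Existence of a real $D$ therefore reduces to the inequality $z^2\geq 0$.

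The core of the argument is the polynomial identity
\begin{equation*}
32\,S^2\,(a'^2-x^2-y^2)=\Gamma(A,B,C,D),
\end{equation*}
in which the right-hand side is the Cayley--Menger determinant regarded as a polynomial in the six squared edge-lengths. This follows from Proposition~\ref{second_equality}~i): whenever a genuine tetrahedron is present, one computes $\mathcal{V}=\tfrac{1}{3}S|z|$, hence $288\,\mathcal{V}^2=32\,S^2 z^2$; since both sides of the identity are polynomials in $a^2,\dots,c'^2$, the equality extends to all values of the parameters. In particular, when $S>0$, condition ii) is exactly equivalent to $z^2\geq 0$, which produces the required $D\in\mathbb{R}^3$ (planar quadruple when $\Gamma=0$, non-planar when $\Gamma>0$).

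I expect the most delicate step to be the degenerate case $S=0$, where the triangle $ABC$ collapses to a segment and the linear system for $(x,y)$ loses rank. Here $\Gamma$ becomes non-positive as a polynomial (the relevant Gram matrix $Gram(\overrightarrow{AB},\overrightarrow{AC},\overrightarrow{AD})$ is forced to be rank-deficient), so condition ii) reads $\Gamma=0$; this in turn is precisely the compatibility condition for the reduced distance equations along the line. A short direct computation with the collinear configuration then closes this boundary case. Apart from this delicate analysis, the main technical step is the polynomial bookkeeping that identifies $32 S^2(a'^2-x^2-y^2)$ with $\Gamma(A,B,C,D)$.
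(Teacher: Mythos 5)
Your construction for the case $S>0$ is sound, and it is essentially a coordinate version of the paper's argument: the paper invokes the criterion that the tetrahedron exists iff $Gram(\overrightarrow{AB},\overrightarrow{AC},\overrightarrow{AD})$ is positive semidefinite, and placing $A,B,C$ in the plane $z=0$ and solving for $D$ amounts to exhibiting a factorization of that Gram matrix. Your identity $32S^2(a'^2-x^2-y^2)=\Gamma(A,B,C,D)$ is correct (both sides agree on the open set of genuine tetrahedra, hence as rational functions of the squared edge lengths), and it does reduce sufficiency to $\Gamma\geq 0$ when $S>0$; in Gram-matrix language this is the interlacing fact that a symmetric $3\times3$ matrix with positive leading $2\times2$ minor (that minor equals $4S^2$) and nonnegative determinant is automatically positive semidefinite. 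Up to this point you supply more detail than the paper, whose proof is a one-line appeal to the Gram criterion.

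The gap is the boundary case $S=0$, and it cannot be closed as you propose, because the statement is actually false there. Your claim that ``$\Gamma=0$ is precisely the compatibility condition for the reduced distance equations along the line'' is only half the story: when $A,B,C$ are collinear, a candidate $D$ is determined by a signed abscissa $t$ along the line and a squared distance $h^2$ to it, and besides the compatibility of the two linear equations for $t$ one still needs $h^2=a'^2-t^2\geq 0$, which conditions i) and ii) do not imply. Concretely, take $a=b=1$, $c=2$ (so $C$ is the midpoint of $[AB]$ and $S=0$) and $a'=\tfrac12$, $b'=\tfrac72$, $c'=\tfrac{\sqrt{21}}{2}$. Condition i) holds with equality, and a direct computation gives $\Gamma=-2(a'^2+b'^2-2c'^2-2)^2=0$, so condition ii) holds as well; yet no four points in any Euclidean space realize these lengths, since the triangle inequality forces $BD\leq BA+AD=\tfrac52<\tfrac72$. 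So i) and ii) are sufficient only under the extra hypothesis $S>0$ (or after adding the analogous conditions for the other faces, i.e., the remaining principal minors of the Gram matrix). The paper's own proof has the same blind spot; your write-up at least isolates where the difficulty sits, but the ``short direct computation'' you defer to would produce a counterexample rather than finish the proof.
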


\begin{proof} (See also~\cite{5} p. 545)
Assertion $i)$ of Proposition~\ref{algebraic_quantities} implies that the condition $ii)$ is necessary.\\
The following can be generalized in any dimension:\\
The tetrahedron exists if and only if ${Gram(\stackrel{\rightarrow}{AB},\stackrel{\rightarrow}{AC},\stackrel{\rightarrow}{AD})}$ is a symmetric non negative matrix. That gives the conditions $i),ii)$.  
\end{proof}
\subsection{Isosceles (or equifacetal) tetrahedra.} {\ }
\begin{defi} Let $\mathcal{T}$ be a non planar tetrahedron whose circumradius is $1$. The tetrahedron $\mathcal{T}$ is said to be isosceles (or equifacetal) if $a=a',b=b',c=c'$.
\end{defi}

\begin{prop} \label{isosceles}We have
\begin{enumerate}[i)]
 \item $\mathcal{T}$ is isosceles if and only if \;$G=O$. (see~\cite{1} p. 197).
 \item If  $\mathcal{T}$ is isosceles then  \;$72\;\mathcal{V}^2(\mathcal{T})=(b^2+c^2-a^2)(c^2+a^2-b^2)(a^2+b^2-c^2)$ and $a^2<b^2+c^2$, $b^2<c^2+a^2$, $c^2<a^2+b^2$. (see~\cite{3} p. 205).
 \item For all $\mathcal{T}$, $0<aa'+bb'+cc'\leq{8}$. 
Moreover $aa'+bb'+cc'=8$ if and only if $\mathcal{T}$ is isosceles. (see~\cite{5} p. 558).
\end{enumerate}
\end{prop}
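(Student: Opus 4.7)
The plan is to deduce each part directly from Propositions~\ref{first_equality} and~\ref{algebraic_quantities}. For part~i), the isosceles hypothesis collapses the four expressions in Proposition~\ref{first_equality}~ii) to the common value $\tfrac{1}{8}(a^2+b^2+c^2)$, so $G$ is equidistant from $A,B,C,D$ and must coincide with the circumcenter $O$. Conversely, $G=O$ forces $GA^2=GB^2=GC^2=GD^2=R^2=1$; equating each of the first three expressions in Proposition~\ref{first_equality}~ii) with $GD^2$ produces the three linear relations $b^2+c^2=b'^2+c'^2$, $a^2+c^2=a'^2+c'^2$, $a^2+b^2=a'^2+b'^2$, and the combination $(2)+(3)-(1)$ yields $a=a'$; by symmetry $b=b'$ and $c=c'$.

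For part~ii), substituting $a=a'$, $b=b'$, $c=c'$ into the Ptolemy factorization of Proposition~\ref{algebraic_quantities}~i) gives
\[
Pt(\mathcal{T})=(b^2+c^2-a^2)(c^2+a^2-b^2)(a^2+b^2-c^2)(a^2+b^2+c^2),
\]
whereas Proposition~\ref{algebraic_quantities}~ii) (with $R=1$) says $Pt(\mathcal{T})=576\,\mathcal{V}^2$. Part~i) combined with Proposition~\ref{first_equality}~i) applied at $OG=0$ forces $a^2+b^2+c^2=8$, and dividing by $8$ produces the claimed identity $72\,\mathcal{V}^2=(b^2+c^2-a^2)(c^2+a^2-b^2)(a^2+b^2-c^2)$. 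Non-degeneracy of $\mathcal{T}$ gives $\mathcal{V}>0$, so the product on the right is positive (in particular no factor is zero), and hence the number of negative factors among the three is even; but adding any two of the hypothetical inequalities $a^2>b^2+c^2$, $b^2>c^2+a^2$, $c^2>a^2+b^2$ yields a negative square, so at most one factor can be negative. Therefore all three factors are strictly positive, which is the triangle-inequality statement on the squares.

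For part~iii), AM-GM applied termwise to $aa'$, $bb'$, $cc'$ gives
\[
aa'+bb'+cc'\leq\tfrac{1}{2}(a^2+a'^2+b^2+b'^2+c^2+c'^2)=8(1-OG^2)\leq 8,
\]
where the middle equality is Proposition~\ref{first_equality}~i). Positivity is immediate from $a,a',b,b',c,c'>0$. Equality throughout forces simultaneously $OG=0$ and $a=a'$, $b=b'$, $c=c'$; either of these conditions characterizes an isosceles tetrahedron by part~i), and both hold conversely when $\mathcal{T}$ is isosceles.

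The only step with any delicacy is the parity/sign argument in part~ii); everything else is a direct reduction to the two earlier propositions together with a single application of AM-GM.
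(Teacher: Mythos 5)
Your proposal is correct. Note, though, that the paper does not prove Proposition~\ref{isosceles} at all: each part is stated with a citation to the literature ([1] p.~197, [3] p.~205, [5] p.~558), so there is no in-paper argument to compare against. What you have done is supply a self-contained derivation from Propositions~\ref{first_equality} and~\ref{algebraic_quantities}, and every step checks out: the collapse of the four formulas in Proposition~\ref{first_equality}~ii) to $\tfrac18(a^2+b^2+c^2)$ and the linear elimination $(2)+(3)-(1)\Rightarrow a=a'$ for part~i); the substitution into the Ptolemy factorization together with $a^2+b^2+c^2=8$ (from $OG=0$ in Proposition~\ref{first_equality}~i)) for part~ii); and AM-GM plus $\tfrac12\sum = 8(1-OG^2)$ with the equality analysis for part~iii). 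Two small remarks. First, for the strict inequalities in part~ii) you could bypass the parity/sign argument entirely: Proposition~\ref{algebraic_quantities}~ii) already asserts that each factor $bb'+cc'-aa'$, etc.\ is positive for a non-planar tetrahedron (Ptolemy), and under $a=a',b=b',c=c'$ these are exactly $b^2+c^2-a^2$, etc. Your parity argument is nonetheless valid as written. Second, your part~i) forward direction implicitly uses that a point equidistant from four non-coplanar points is the circumcenter; that is standard and consistent with how the paper itself later uses Proposition~\ref{medianes_tetraedre}, but it is worth stating explicitly since it is where non-planarity enters.
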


The following result is a direct consequence of propositions~\ref{isosceles} and~\ref{first_equality}.

\begin{prop}\label{medianes_tetraedre}
Let $\mathcal{T}=(ABCD)$ be a tetrahedron eventually planar whose centroid is $G$.\\  
$GA=GB=GC=GD$ if and only if $\mathcal{T}$is an isosceles tetrahedron or a planar rectangle.
\end{prop}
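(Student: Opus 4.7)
The plan is to combine the explicit expressions in Proposition~\ref{first_equality}(ii) for $GA^2,\ldots,GD^2$ with the isosceles characterisation of Proposition~\ref{isosceles}(i). The ``if'' direction is immediate: for an isosceles $\mathcal{T}$, Proposition~\ref{isosceles}(i) gives $G=O$, whence $GA=GB=GC=GD$; for a planar rectangle the centroid and the circumcentre both coincide with the intersection of the diagonals, so again $GA=GB=GC=GD$.

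For the ``only if'' direction, I would substitute the formulas of Proposition~\ref{first_equality}(ii) into the equations $GA^2=GB^2=GC^2=GD^2$. Each difference $GX^2-GD^2$ telescopes to a single linear identity, producing the system
\begin{align*}
b^2+c^2=b'^2+c'^2,\qquad a^2+c^2=a'^2+c'^2,\qquad a^2+b^2=a'^2+b'^2,
\end{align*}
and summing then subtracting in turn forces $a=a'$, $b=b'$, $c=c'$. In the non-planar case this is by definition the isosceles condition, and the proof is complete.

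The remaining step, which is where the genuine care is required, is the planar subcase. Here $A,B,C,D$ are coplanar distinct points equidistant from their centroid $G$, hence lying on a circle in the plane with centre $G$. Writing $\vec{a}=\overrightarrow{GA},\ldots,\vec{d}=\overrightarrow{GD}$, these are four planar vectors of common length $r$ satisfying $\vec{a}+\vec{b}+\vec{c}+\vec{d}=\vec{0}$. Setting $\vec{w}=\vec{a}+\vec{b}=-(\vec{c}+\vec{d})$ and using that in the plane an unordered pair of vectors of length $r$ is uniquely determined by its sum, the two pairs $\{\vec{a},\vec{b}\}$ and $\{\vec{c},\vec{d}\}$ must be images of one another under the central symmetry through the origin; hence $\{\vec{a},\vec{b},\vec{c},\vec{d}\}$ splits into two antipodal pairs, which is precisely the characterisation of a rectangle. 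The principal obstacle is concentrated in this planar step: the identities $a=a',b=b',c=c'$ delivered by the algebra of Proposition~\ref{first_equality}(ii) coincide verbatim with the definition of an isosceles tetrahedron in dimension three, whereas in the degenerate planar situation they must be supplemented by the planar-geometry argument above in order to recover the rectangular conclusion.
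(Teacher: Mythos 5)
Your proof is correct and follows exactly the route the paper intends: the paper gives no explicit argument, stating only that the result is a direct consequence of Propositions~\ref{first_equality} and~\ref{isosceles}, and your linear system $b^2+c^2=b'^2+c'^2$, etc., forcing $a=a'$, $b=b'$, $c=c'$ is precisely that derivation (note that the formulas of Proposition~\ref{first_equality}(ii) are pure vector identities, so they remain valid in the planar case). One minor caveat in your planar step: the claim that an unordered pair of length-$r$ vectors is determined by its sum fails when that sum is $\vec{0}$, but in that case $\vec{b}=-\vec{a}$ and $\vec{d}=-\vec{c}$ already exhibit the two antipodal pairs directly, so the rectangle conclusion survives and the gap is trivially closed.
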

% \begin{proof}
% For the part ``if'', use Proposition 5 i); for the part ``only if'', use Proposition 1 ii). 
% \end{proof}

%//////////////////////////////////////////////////////////////////////////////////////////////////////////////////////////////////////////////////
    \section{Deformation from $\mathcal{T}_0$ to $\mathcal{T}_1$ ($d=3$)}
\subsection{Parameters and notations}
We adapt our preceding notations; $A_0$, $B_0$, $C_0$, $D_0$ are four points of $\mathcal{S}$. We assume only $A_0$, $B_0$, $C_0$, $D_0$ are not all equal. Then the functions $\phi,\psi$ are continuous in $\mathcal{T}_0$.
\begin{rem}
i) The following calculations are valid if $A_0$, $B_0$, $C_0$, $D_0$ are four points, not all equal, of $\mathcal{C}$, a planar circle of center $O$ and radius $1$.\\
ii) Let $\Delta=\{(P,P,P,P): P\in\mathcal{S}\}$. $\phi$ is defined on $\mathcal{S}^4\backslash\Delta$.
\end{rem}

\begin{defi}
The parameters $(d_{ij})_{i<j}$ of $\mathcal{T}_0$ are the square of the lengths of their edges:
$d_{12}=A_0{B_0}^2,d_{13}=A_0{C_0}^2,d_{14}=A_0{D_0}^2,d_{23}=B_0{C_0}^2,d_{24}=B_0{D_0}^2,d_{34}=C_0{D_0}^2$. Let us recall that they are linked by a polynomial equality (see~Assertion $iii)$ of Proposition~\ref{second_equality}) and two polynomial inequalities (see~Proposition~\ref{nsc_tetrahedron}).
\end{defi}
Let $g_1=G_0{A_0}^2,g_2=G_0{B_0}^2,g_3=G_0{C_0}^2,g_4=G_0{D_0}^2$ and let $p_0=1-O{G_0}^2=\dfrac{1}{4}(g_1+g_2+g_3+g_4)$ and $p_1$ be the opposites of the powers of $G_0$ and $G_1$ with respect to $\mathcal{S}$. According to Proposition $1$, $(g_i)_i$ and $p_0$ are polynomial functions of $(d_{ij})_{ij}$.
\begin{prop}\label{equations}We have
\begin{enumerate}[i)]
 \item The parameters of $\mathcal{T}_1$ are $\left( d_{ij}^1={p_0}^2\dfrac{d_{ij}}{g_ig_j}\right)_{i<j}$.
 \item $p_1=\dfrac{{p_0}^2}{16}\sum_{i<j}\dfrac{d_{ij}}{g_ig_j}$.
 \item $O{G_0}^2$ and $O{G_1}^2$ are rational functions of the $(d_{ij})_{ij}$.
 \item $\mathcal{V}(\mathcal{T}_0)^2$ and $\mathcal{V}(\mathcal{T}_1)^2$ are rational functions of the $(d_{ij})_{ij}$.
\end{enumerate}
\end{prop}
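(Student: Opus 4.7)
The plan is to use the power of the point $G_0$ with respect to the circumsphere $\mathcal{S}$ as the central tool. Since the four vertices $A_0, B_0, C_0, D_0$ lie on $\mathcal{S}$ and are not all equal, $G_0$ lies strictly inside $\mathcal{S}$, so its (opposite) power is $p_0 = 1 - OG_0^2 > 0$. For any index $i$, the line through $G_0$ and $V_i^{(0)}$ meets $\mathcal{S}$ at $V_i^{(0)}$ and $V_i^{(1)}$, and the power relation written with vectors gives $\overrightarrow{G_0 V_i^{(1)}} = -\dfrac{p_0}{g_i}\,\overrightarrow{G_0 V_i^{(0)}}$. This single identity will drive all four assertions.

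For assertion (i), I would compute $d_{ij}^{1} = \|\overrightarrow{G_0 V_j^{(1)}} - \overrightarrow{G_0 V_i^{(1)}}\|^{2}$ by expanding the square and using the previous identity, which yields
\[
d_{ij}^{1}=\frac{p_0^{2}}{g_i}+\frac{p_0^{2}}{g_j}-\frac{2p_0^{2}}{g_i g_j}\,\overrightarrow{G_0 V_i^{(0)}}\!\cdot\!\overrightarrow{G_0 V_j^{(0)}}.
\]
Replacing the dot product by $\tfrac12(g_i+g_j-d_{ij})$ via the polarization identity produces a clean cancellation, leaving exactly $p_0^{2}\,d_{ij}/(g_i g_j)$. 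For assertion (ii), apply Proposition~\ref{first_equality}(i) to $\mathcal{T}_1$ (whose circumsphere is still $\mathcal{S}$): $p_1 = 1-OG_1^{2}=\tfrac{1}{16}\sum_{i<j} d_{ij}^{1}$. Plugging in (i) immediately gives the stated formula.

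Assertions (iii) and (iv) then follow essentially for free. Proposition~\ref{first_equality}(ii) expresses each $g_i$ as a polynomial in the $(d_{ij})$, so $p_0$ and all the $g_i$ are polynomial (hence rational) in $(d_{ij})$; combined with (i) and (ii), this makes $OG_0^{2}=1-p_0$ polynomial and $OG_1^{2}=1-p_1$ rational in $(d_{ij})$. For (iv), the Cayley--Menger formula of Proposition~\ref{second_equality}(i) expresses $\mathcal{V}(\mathcal{T})^{2}$ as a polynomial in the squared edge lengths; for $\mathcal{T}_0$ this is already polynomial in $(d_{ij})$, and for $\mathcal{T}_1$ one substitutes the rational expressions $d_{ij}^{1}=p_0^{2}d_{ij}/(g_ig_j)$ obtained in (i).

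The one step requiring care is the algebraic simplification in (i): one has to carry the signs of the power relation correctly (the factor $-p_0/g_i$ rather than $+p_0/g_i$) and verify that the three terms really collapse to the single monomial $p_0^{2}d_{ij}/(g_ig_j)$. Once this identity is established cleanly, the rest of the proposition is a matter of substitution and invoking Propositions~\ref{first_equality} and~\ref{second_equality}.
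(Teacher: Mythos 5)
Your proof is correct and takes essentially the same route as the paper: both hinge on the power of the point $G_0$ with respect to $\mathcal{S}$ to get $d_{ij}^1=p_0^2\,d_{ij}/(g_ig_j)$ — the paper via the similar triangles $(G_0A_0B_0)$ and $(G_0B_1A_1)$, you via the equivalent vector identity $\overrightarrow{G_0A_1}=-\tfrac{p_0}{g_1}\overrightarrow{G_0A_0}$ together with polarization. The remaining assertions are then obtained in both cases by the same substitutions into Proposition~\ref{first_equality} and Proposition~\ref{second_equality}.
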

\begin{proof}
For $i)$: $G_0A_0\times G_0A_1=p_0$ and ${G_0A_1}^2=\dfrac{{p_0}^2}{g_1}$. The triangles $(G_0A_0B_0)$ and $(G_0B_1A_1)$ are similar; then $\dfrac{A_1B_1}{A_0B_0}=\dfrac{G_0A_1}{G_0B_0}$ and ${A_1B_1}^2=d_{12}\dfrac{{p_0}^2}{g_1g_2}$. 

We deduce $ii)$ from $i)$ and $iii)$ from $ii)$. We deduce $iv)$ from $i)$ and Proposition~\ref{second_equality}. $i)$.
\end{proof}

\subsection{Inequalities.}{\ }

Let $\mathcal{T}$ be a tetrahedron or a cyclic quadrilateral.

Let $\mathcal{P}(\mathcal{T})$ be the property: ``the edges of $\mathcal{T}$ satisfy $d_{12}=d_{34},d_{13}=d_{24},d_{14}=d_{23}$'' i.e. ``$\mathcal{T}$ is an  isosceles tetrahedron or a planar rectangle''.

The first key is the following result:
 \begin{thm}\label{inequalities} We have the following inequalities:
\begin{enumerate}[i)]
 \item $O{G_1}\leq{O{G_0}}$;
 \item for any $(ijkl)\in\{(1234),(1324),(1423)\}$, $d_{ij}d_{kl}\leq{d_{ij}^1d_{kl}^1}$;
 \item $Pt(\mathcal{T}_0)\leq{Pt(\mathcal{T}_1)}$ (see~Proposition 3); 
\end{enumerate}
Moreover, for \textit{$i)$},\textit{ $ii)$} and \textit{$iii)$}, equalities stands if and only if $\mathcal{P}(\mathcal{T}_0)$ holds.
\end{thm}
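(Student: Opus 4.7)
The plan is to express all three inequalities in terms of the squared distances $g_i$ from $G_0$ to the four vertices of $\mathcal{T}_0$, using the formulas of Proposition~\ref{equations} together with $p_0=(g_1+g_2+g_3+g_4)/4$. Two of them fall immediately out of AM--GM on the $g_i$; only (i) requires substantial work.

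\emph{Inequality (ii).} For any partition $\{i,j\}\cup\{k,l\}=\{1,2,3,4\}$ into two opposite edges, the formula $d^1_{ij}=p_0^2 d_{ij}/(g_ig_j)$ of Proposition~\ref{equations} gives
\[
d^1_{ij}\,d^1_{kl}=\frac{p_0^4}{g_1g_2g_3g_4}\,d_{ij}\,d_{kl},
\]
so (ii) reduces to $p_0^4\geq g_1g_2g_3g_4$, which is exactly AM--GM applied to $g_1,g_2,g_3,g_4$. Equality occurs iff $g_1=g_2=g_3=g_4$, which by Proposition~\ref{medianes_tetraedre} characterises $\mathcal{P}(\mathcal{T}_0)$.

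\emph{Inequality (iii).} Since $(a_1a_1')^2=d^1_{23}d^1_{14}=\lambda^2(aa')^2$ with $\lambda=p_0^2/\sqrt{g_1g_2g_3g_4}$, and similarly for $bb'$ and $cc'$, the three products of opposite edges are all rescaled by the same factor $\lambda$. As $Pt(\mathcal{T})$ is homogeneous of degree $4$ in $(aa',bb',cc')$ by Proposition~\ref{algebraic_quantities}, this yields $Pt(\mathcal{T}_1)=\lambda^4 Pt(\mathcal{T}_0)$; combined with $\lambda\geq 1$ (AM--GM) and $Pt\geq 0$ (Proposition~\ref{algebraic_quantities}), we obtain (iii). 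The equality analysis is the same, except for the degenerate cyclic-quadrilateral case in which both sides vanish.

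\emph{Inequality (i).} By Proposition~\ref{first_equality}, (i) is equivalent to $\sum_{j<l}d^1_{jl}\geq\sum_{j<l}d_{jl}$, which after substitution becomes
\[
p_0\sum_{j<l}\frac{d_{jl}}{g_jg_l}\geq 16.
\]
A more illuminating form comes from Leibniz's parallel-axis identity together with the power-of-a-point relation $G_0A_1^i=p_0/\sqrt{g_i}$:
\[
p_1=\frac{p_0^2}{4}\sum_i\frac{1}{g_i}\ -\ G_0G_1^2.
\]
The AM--HM inequality yields $\tfrac{p_0^2}{4}\sum_i 1/g_i\geq p_0$ with equality iff the $g_i$ coincide, so it remains to dominate the correction $G_0G_1^2$ by this excess. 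I expect this to be the main obstacle: writing $\overrightarrow{G_0G_1}=-\tfrac{p_0}{4}\sum_i\overrightarrow{G_0A_0^i}/g_i$ shows that $G_0G_1^2$ depends on all scalar products $\overrightarrow{G_0A_0^i}\cdot\overrightarrow{G_0A_0^j}$, and hence on all $d_{jl}$, so the inequality does not follow from AM--GM on the $g_i$ alone. My plan is to clear denominators and use the Cayley--Menger relation (Proposition~\ref{second_equality}) together with the linear expressions of Proposition~\ref{first_equality} for $g_i$ in terms of the $d_{jl}$, reducing (i) to a single polynomial inequality in the $d_{jl}$ on the admissibility domain of Proposition~\ref{nsc_tetrahedron}. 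A Gröbner basis positivity certificate, of the kind advertised in the introduction, should then complete the proof and identify the equality locus as exactly $g_1=g_2=g_3=g_4$, i.e.\ $\mathcal{P}(\mathcal{T}_0)$.
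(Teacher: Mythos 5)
Your treatments of (ii) and (iii) are correct and coincide with the paper's: both come down to $\Lambda=p_0^4/(g_1g_2g_3g_4)\geq 1$ by AM--GM, with $d^1_{ij}d^1_{kl}=\Lambda\,d_{ij}d_{kl}$ and $Pt(\mathcal{T}_1)=\Lambda^2 Pt(\mathcal{T}_0)$, and the equality case handled by Proposition~\ref{medianes_tetraedre}. Your Leibniz reformulation of (i), namely $p_1=\tfrac{p_0^2}{4}\sum_i g_i^{-1}-G_0G_1^2$ with the AM--HM bound $\tfrac{p_0^2}{4}\sum_i g_i^{-1}\geq p_0$, is also correct and is a genuinely nice way to isolate where the difficulty lies: one must dominate $G_0G_1^2$ by the AM--HM excess, and this cannot be done from the $g_i$ alone.

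However, for (i) you stop at exactly the point where the theorem's content begins. "Clear denominators and obtain a Gr\"obner basis positivity certificate" is a plan, not a proof: Gr\"obner bases decide ideal membership, not positivity on a semialgebraic set, so some additional device is needed to turn the target polynomial into something visibly nonnegative. The paper's device is to order the $g_i$ without loss of generality and substitute $g_1-g_2=s_1\geq 0$, $g_2-g_3=s_2\geq 0$, $g_3-g_4=s_3\geq 0$, $g_4 s_4=1$ with $s_4>0$; in these variables the quantity $E=64(p_1-p_0)g_1g_2g_3/p_0$ turns out (after an FGb computation) to be a polynomial in $s_1,s_2,s_3,s_4$ and the $d_{ij}$ with \emph{all coefficients nonnegative}, which is the certificate. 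The equality case then reads off as $s_1=s_2=s_3=0$, i.e.\ $g_1=g_2=g_3=g_4$, giving $\mathcal{P}(\mathcal{T}_0)$. Without producing such an expression (or an equivalent SOS-type decomposition on the admissibility domain), your argument for (i) is incomplete; you have reduced the problem correctly but not solved it. A minor additional point: your caveat on the equality case of (iii) for cyclic quadrilaterals is apt, and note that the paper itself discards (iii) in the planar case for precisely that reason.
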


\begin{proof}
$\bullet$ For $i)$: 
 the following choice of unknowns enables us to conclude; we may assume $g_1-g_2=s_1,g_2-g_3=s_2,g_3-g_4=s_3,g_4s_4=1$ where $s_1,s_2,s_3\geq{0}$ and $s_4>0$.
Let's set $E=64\dfrac{(p_1-p_0)g_1g_2g_3}{p_0}$.

We note that $signum(E)=signum(O{G_0}^2-O{G_1}^2)$ and we obtain, using the J. C. Faugere's software ``FGb'', this miraculous result:\\
$E=d_{23}(s_1-s_3)^2+16 {s_2}^2 {s_3}^2 s_4+4 d_{34} s_1 s_2+12 s_1 {s_2}^2+4 s_1 s_2 s_3+20 s_2 {s_3}^2+21 s_2 {s_3}^3 s_4+d_{24} {s_3}^2+d_{34} {s_1}^2+4 s_1 {s_3}^2+d_{24} {s_1}^2 s_3 s_4+d_{34} {s_1}^2 s_3 s_4+12 {s_3}^3+2 d_{24} s_1 s_2 s_3 s_4+8 {s_2}^3+3 d_{24} s_1 {s_3}^2 s_4+6 d_{34} s_1 s_2 s_3 s_4+4 d_{34}{ s_2}^2+d_{34} {s_3}^2+5 d_{34} {s_2}^2 s_3 s_4+d_{24} {s_1}^2+3 d_{34} s_1 {s_2}^2 s_4+4 {s_2}^3 s_3 s_4+3 d_{34} s_2 {s_3}^2 s_4+2 d_{24} s_1 s_3+12 {s_2}^2 s_3+2 d_{34} s_1 s_3+4 d_{34} s_2 s_3+9 {s_3}^4 s_4+2 d_{34} {s_2}^3 s_4+d_{34} {s_1}^2 s_2 s_4+{s_1}^2 s_2 s_3 s_4+4 s_1 {s_2}^2 s_3 s_4+{s_1}^2 {s_3}^2 s_4+10 s_1 s_2 {s_3}^2 s_4+6 s_1 {s_3}^3 s_4+3 d_{34} s_1 {s_3}^2 s_4$; obviously $E$ is a non negative real.

Moreover if $E=0$ then $s_1=s_2=s_3=0$ and, according to Proposition~\ref{medianes_tetraedre}, $\mathcal{P}(\mathcal{T}_0)$ holds.\\
$\bullet$ For $ii)$: let $\Lambda=\dfrac{p_0^4}{g_1g_2g_3g_4}$. By Proposition~\ref{equations}, $d_{ij}^1d_{kl}^1=\Lambda{d_{ij}d_{kl}}$. According to the AM-GM inequality, $\Lambda\geq{1}$ and $d_{ij}d_{kl}\leq{d_{ij}^1d_{kl}^1}$.

Moreover if $d_{ij}d_{kl}={d_{ij}^1d_{kl}^1}$ then $\Lambda=1$ and according to the properties of the AM-GM inequality, $g_1=g_2=g_3=g_4$ and $\mathcal{P}(\mathcal{T}_0)$ holds.\\
$\bullet$ For $iii)$: An easy computation gives $Pt(\mathcal{T}_1)=\Lambda^2{Pt(\mathcal{T}_0)}$ and we reason as for $ii)$. 
\end{proof}

%//////////////////////////////////////////////////////////////////////////////////////////////////////////////////////////////////////////////////
    \section{Solution of the case $d=3$. Part 1}
The $(d_{ij}^n)_{i<j}$ and $(d_{ij})_{i<j}$ refer to the parameters of $\mathcal{T}_n$ and $\mathcal{T}_0$.
\begin{thm}\label{convergence_ogn}
\begin{enumerate}[i)]
\item The sequence $(O{G_n})_n$ tends to $0$.
\item  Let $\mathcal{T}=(ABCD)$ be a cluster point of the bounded sequence $(\mathcal{T}_i)$.
 Then $\mathcal{T}$ is not flat and is isometric to a fixed isosceles tetrahedron that admits the following parameters:\\
  ${d_{12}^\infty}^2=Ld_{12}d_{34},{d_{13}^\infty}^2=Ld_{13}d_{24},{d_{14}^\infty}^2=Ld_{14}d_{23}$ where\\ $L=\dfrac{64}{(\sqrt{d_{12}d_{34}}+\sqrt{d_{13}d_{24}}+\sqrt{d_{14}d_{23}})^2}$.
\end{enumerate}
  \end{thm}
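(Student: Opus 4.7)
The plan is to combine the monotone estimates of Theorem~\ref{inequalities} with compactness of $\mathcal{S}^4$ and the equality cases. The crucial observation I will exploit is that in the proof of Theorem~\ref{inequalities}(ii), \emph{the same} factor $\Lambda_n = p_0^4/(g_1g_2g_3g_4)$ (evaluated at $\mathcal{T}_n$) appears in $d_{ij}^{n+1}d_{kl}^{n+1} = \Lambda_n\, d_{ij}^n d_{kl}^n$ for all three opposite-edge pairings $(ij,kl)\in\{(12,34),(13,24),(14,23)\}$. Consequently the three ratios $d_{ij}^n d_{kl}^n/(d_{ij}^0 d_{kl}^0)$ coincide with a single number $K_n := \prod_{k<n}\Lambda_k$, and $Pt(\mathcal{T}_n) = K_n^{2}\, Pt(\mathcal{T}_0)$.

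I would first check convergence of the monotone sequences. Theorem~\ref{inequalities} gives $OG_n \downarrow \ell \geq 0$, while the $K_n$ are increasing and bounded above (all edge-lengths are at most $2$, so $d_{ij}^n d_{kl}^n \leq 16$), hence $K_n \uparrow L$ for some $L \geq 1$; therefore $Pt(\mathcal{T}_n) \to L^{2} Pt(\mathcal{T}_0) > 0$ whenever $\mathcal{T}_0$ is non-flat. Compactness of $\mathcal{S}^4$ yields a convergent subsequence $\mathcal{T}_{n_k} \to \mathcal{T}=(ABCD)$; continuity of $Pt$ gives $Pt(\mathcal{T}) > 0$, so $\mathcal{T}$ is non-flat, in particular $\mathcal{T} \notin \Delta$, and $\phi$ is continuous at $\mathcal{T}$.

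The central step is to show that $\mathcal{T}$ is isosceles. Since $\mathcal{T}_{n_k+1} = \phi(\mathcal{T}_{n_k}) \to \phi(\mathcal{T})$, the point $\phi(\mathcal{T})$ is itself a cluster point of $(\mathcal{T}_i)$, and therefore $OG(\phi(\mathcal{T})) = \ell = OG(\mathcal{T})$. The equality case of Theorem~\ref{inequalities}(i) then forces property $\mathcal{P}(\mathcal{T})$, which combined with non-flatness means $\mathcal{T}$ is an isosceles tetrahedron. Assertion (i) follows at once from Proposition~\ref{isosceles}(i): the isosceles property gives $\ell = OG(\mathcal{T}) = 0$.

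For (ii), the isosceles identities $d_{12}^\infty = d_{34}^\infty$, $d_{13}^\infty = d_{24}^\infty$, $d_{14}^\infty = d_{23}^\infty$ combine with the limiting relation $d_{ij}^\infty d_{kl}^\infty = L\, d_{ij} d_{kl}$ to give $(d_{ij}^\infty)^{2} = L\, d_{ij} d_{kl}$ for each pairing. To determine $L$ I apply Proposition~\ref{isosceles}(iii) to $\mathcal{T}$: rewriting $aa'+bb'+cc' = 8$ as $\sqrt{d_{12}^\infty d_{34}^\infty} + \sqrt{d_{13}^\infty d_{24}^\infty} + \sqrt{d_{14}^\infty d_{23}^\infty} = 8$ and substituting yields $\sqrt{L}\bigl(\sqrt{d_{12}d_{34}}+\sqrt{d_{13}d_{24}}+\sqrt{d_{14}d_{23}}\bigr) = 8$, the announced value of $L$. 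Since $L$ and the initial products do not depend on the chosen cluster point, all cluster points are isometric to the same isosceles tetrahedron. The only delicate point is the non-degeneracy of $\mathcal{T}$ — without it the continuity argument for $\phi$ collapses — and this is exactly what the strict positivity of $\lim Pt(\mathcal{T}_n)$ guarantees.
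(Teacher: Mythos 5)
Your proposal is correct and follows essentially the same route as the paper: the monotone quantities from Theorem~\ref{inequalities}, compactness of $\mathcal{S}^4$, continuity of $\phi$ at a non-flat cluster point, the equality cases forcing $\mathcal{P}(\mathcal{T})$, and Proposition~\ref{isosceles}(iii) to pin down $L$. Your packaging of the key step --- $\phi$ sends the cluster point $\mathcal{T}$ to another cluster point, which must have the same $OG$-value, so equality in Theorem~\ref{inequalities}(i) forces $\mathcal{T}$ to be isosceles --- is a slightly cleaner rendering of the paper's $\epsilon$--$\alpha$ contradiction argument, but it is the same underlying idea.
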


     \begin{proof}
\indent According to Theorem~\ref{inequalities}$\;iii)$, the sequence $(Pt(\mathcal{T}_i))$ is increasing, then is positive and, for all $i$, $A_i,B_i,C_i,D_i$ are not coplanar; for all $i$, $G_i\notin\{A_i,B_i,C_i,D_i\}$ then the sequence $(\mathcal{T}_{i})$ is well defined and the functions $\phi,\psi$ are continuous in $(\mathcal{T}_i)$. Moreover the bounded sequence $(Pt(\mathcal{T}_i))$ converges to $Pt>0$.\\  
 \indent According to Theorem~\ref{inequalities}$\;i)$, the bounded sequence $(O{G_n})$ is decreasing and converges to $r\geq{0}$. We can extract a subsequence $(\mathcal{T}_{n_k})_k$ such that $(A_{n_k})_k$, $(B_{n_k})_k$, $(C_{n_k})_k$, $(D_{n_k})_k$ converge to $A,B,C,D$. Let $\mathcal{T}$ be the tetrahedron $(ABCD)$ and $G$ its centroid. Therefore $(G_{n_k})_k$ converges to $G$, $O{G}=r$ and $Pt(\mathcal{T})=Pt$; thus $A,B,C,D$ are not coplanar and $\phi(\mathcal{T})=\mathcal{T}'=(A'B'C'D')$ is well defined; let $G'$ be its centroid.
 
 Assume $\mathcal{T}$ is not isosceles; then Theorem~\ref{inequalities} $i)$ implies that $OG'<OG$. Let $\epsilon\in ( 0,O{G}-O{G'} )$ and let $\alpha>0$ such that if $||\mathcal{T}-\mathcal{T}_n||<\alpha$ then $O{G_{n+1}}-O{G'}<\epsilon$. There exists an integer $n_k$ such that $||\mathcal{T}-\mathcal{T}_{n_k}||<\alpha$; then $O{G_{n_k+1}}-O{G'}<\epsilon$. Thus $O{G_{n_k+1}}<O{G}=r$, a contradiction. Therefore $\mathcal{T}$ is isosceles and $(O{G_n})_n$ converges to $0$. A parameter $d_{ij}^\infty$ of $\mathcal{T}$ is the limit of the sequence $(d_{ij}^{n_k})_k$.
 
 According to the proof of Theorem~\ref{inequalities} $ii)$, if $\{i,j,k,l\}$ is a permutation of $\{1,2,3,4\}$, then the bounded sequence $(d_{ij}^nd_{kl}^n)_n$ satisfy $d_{ij}^{n+1}d_{kl}^{n+1}=\Lambda_n{d_{ij}^nd_{kl}^n}$; it is increasing then is convergent. Thus the infinite product $\Pi_{n=0}^\infty{\Lambda_n}$ converges to $L\geq{1}$ such that ${d_{ij}^\infty}^2=Ld_{ij}d_{kl}$.\\  $d_{12}^\infty+d_{13}^\infty+d_{14}^\infty=8$ gives the explicit value of $L$ as a function in the $(d_{ij})$.\\  
 Thus there exists an unique value of $d_{ij}^\infty$ that is valid for all cluster points.
 \end{proof}
  \begin{cor}\label{convergence_dij}
 If $i<j$ then the sequence $(d_{ij}^n)_n$ converges to $d_{ij}^\infty$.
 \end{cor}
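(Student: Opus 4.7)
The plan is to leverage Theorem~\ref{convergence_ogn} together with a standard subsequence argument based on the compactness of $\mathcal{S}^4$. The main observation is that the theorem already identifies, for each index pair $(i,j)$, a single candidate limit value $d_{ij}^\infty$ which does not depend on the cluster point chosen; what remains is only the passage from ``every subsequential limit equals $d_{ij}^\infty$'' to ``the whole sequence converges to $d_{ij}^\infty$''.

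First I would note that the sequence $(\mathcal{T}_n)$ lies in the compact set $\mathcal{S}^4$, so in particular each real-valued sequence $(d_{ij}^n)_n$ is bounded. To show convergence of $(d_{ij}^n)_n$ to $d_{ij}^\infty$, it suffices by a routine topological argument to show that every subsequence admits a further subsequence converging to $d_{ij}^\infty$. Given any subsequence, I would first extract a sub-subsequence $(\mathcal{T}_{n_k})_k$ along which all four vertices converge, using Bolzano--Weierstrass on $\mathcal{S}$ four times. The resulting limit $\mathcal{T} = (ABCD)$ is a cluster point of $(\mathcal{T}_n)$, and by Theorem~\ref{convergence_ogn}(ii) it is an isosceles tetrahedron whose parameter at position $(i,j)$ is precisely $d_{ij}^\infty$ as given by the explicit formula, independently of the extraction.

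Since the map $(A,B,C,D) \mapsto d_{ij}$ is continuous, the corresponding subsequence of $(d_{ij}^n)_n$ converges to $d_{ij}^\infty$. This completes the subsequence argument and yields the convergence of the whole sequence $(d_{ij}^n)_n$ to $d_{ij}^\infty$.

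The step that might seem delicate is verifying that the limit value $d_{ij}^\infty$ really is the same for every cluster point, but this is already baked into the formulation of Theorem~\ref{convergence_ogn}(ii): the uniqueness statement at the end of its proof (``there exists a unique value of $d_{ij}^\infty$ that is valid for all cluster points'') is exactly what makes the subsequence argument work. So no genuinely new ingredient is needed; the corollary is a clean topological consequence of the theorem.
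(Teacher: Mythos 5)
Your proof is correct, but it takes a genuinely different route from the paper's. You run the standard compactness argument: $\mathcal{S}^4$ is compact, so from any subsequence you can extract a further subsequence of tetrahedra converging to a cluster point, and the uniqueness established at the end of the proof of Theorem~\ref{convergence_ogn} (every cluster point has the same parameters $d_{ij}^\infty$, given by an explicit formula in the initial data) forces every subsequential limit of the bounded sequence $(d_{ij}^n)_n$ to equal $d_{ij}^\infty$; hence the whole sequence converges. That is a legitimate corollary-level deduction, and your one delicate point --- that the limit value is independent of the extraction --- is indeed already supplied by the theorem. The paper argues differently and more quantitatively: it uses the fact that each product $d_{ij}^nd_{kl}^n$, for $(ijkl)$ in $\mathcal{U}=\{(1234),(1324),(1423)\}$, is increasing and therefore converges to $(d_{ij}^\infty)^2$ along the \emph{whole} sequence, then combines the elementary inequality $2\sum_{(ijkl)\in\mathcal{U}}\sqrt{d_{ij}^nd_{kl}^n}\leq\sum_{i<j}d_{ij}^n\leq 16$ with $\sum_{(ijkl)\in\mathcal{U}}\sqrt{d_{ij}^nd_{kl}^n}\to 8$ to squeeze $\sum_{(ijkl)\in\mathcal{U}}\bigl(\sqrt{d_{ij}^n}-\sqrt{d_{kl}^n}\bigr)^2\to 0$; this gives $d_{ij}^n-d_{kl}^n\to 0$, which together with the convergence of the product pins down each factor. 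Your version is shorter and needs no new inequality, but it leans entirely on the cluster-point analysis of Theorem~\ref{convergence_ogn}; the paper's version avoids a second appeal to compactness and produces an explicit bound on $\bigl(\sqrt{d_{ij}^n}-\sqrt{d_{kl}^n}\bigr)^2$ in terms of $16\,OG_n^2$ and the deficit $8-\sum\sqrt{d_{ij}^nd_{kl}^n}$, which is in the same quantitative spirit as the later estimates such as Lemma~\ref{hijO}. Both arguments are valid.
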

\begin{proof}
Let $\mathcal{U}=\{(1234),(1324),(1423)\}$. If $(ijkl)\in\mathcal{U}$ then 
$d_{ij}^nd_{kl}^n$ converges to ${d_{ij}^\infty}^2$ and $\sum_{(ijkl)\in\mathcal{U}}\sqrt{d_{ij}^nd_{kl}^n}$ converges to $8$.\\
$2\sum_{(ijkl)\in\mathcal{U}}\sqrt{d_{ij}^nd_{kl}^n}\leq{\sum_{i<j}d_{ij}^n}\leq{16}$; thus $\sum_{(ijkl)\in\mathcal{U}}\left(\sqrt{d_{ij}^n}-\sqrt{d_{kl}^n}\right)^2$ converges to $0$ and if $(ijkl)\in\mathcal{U}$  then $d_{ij}^n-d_{kl}^n$ converges to $0$; therefore $d_{ij}^n$ and $d_{kl}^n$ converge to $d_{ij}^\infty$.
\end{proof}
\indent It remains to prove that the $(\mathcal{T}_{2i})$ cannot turn around $O$. %//////////////////////////////////////////////////////////////////////////////////////////////////////////////////////////////////////////////////
    \section{Solution of the case $d=3$. Part 2}
    We assume $\mathcal{T}_0$ is not isosceles. We study the convergence speed of the sequence ${(O{G_n}}^2)$.
    \begin{defi} Let $f_n,g_n$ be two positive sequences.
    \begin{enumerate}
     \item We say that $f_n=\Theta(g_n)$ if and only if there exist two positive reals $\alpha,\beta$ such that, for all sufficiently large $n$, $\alpha{g_n}\leq{f_n}\leq{\beta{g_n}}$.
     \item We say that $f_n\sim{g_n}$ if and only if  $\lim_{n\rightarrow\infty}\dfrac{f_n}{g_n}=1$.
    \end{enumerate}
    \end{defi}
   \subsection{Taylor series I} 
 Let $d_{ij}^n=d_{ij}^\infty+h_{ij}^n$, $h_n=(h_{ij}^n)_{i<j}$ (the sequence $(h_n)$ converges to the zero vector) and $\delta_n=\sum_{i<j}h_{ij}^n$. Let $\epsilon_n=(h_{12}^n-h_{34}^n)^2d_{12}^\infty+(h_{13}^n-h_{24}^n)^2d_{13}^\infty+(h_{14}^n-h_{23}^n)^2d_{14}^\infty$.
 \begin{prop}
 ${OG_{n+1}}^2=\dfrac{-\delta_n}{16}-\dfrac{1}{16^2}\epsilon_n+O(||h_n||^3)$.
 \end{prop}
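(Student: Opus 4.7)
The plan is to Taylor expand $OG_{n+1}^2$, viewed as a rational function of $(d_{ij}^n)_{i<j}$, around the limit values $(d_{ij}^\infty)_{i<j}$, which parametrize the limit isosceles tetrahedron of circumradius $1$. Combining Proposition~\ref{first_equality}~i) and Proposition~\ref{equations}~i), I would first write
\[
OG_{n+1}^2 \;=\; 1 \;-\; \frac{p_n^{\,2}}{16}\sum_{i<j}\frac{d_{ij}^n}{g_i^n g_j^n}, \qquad p_n \;=\; \frac{1}{16}\sum_{i<j} d_{ij}^n,
\]
with the $g_i^n$ expressed as linear combinations of the $(d_{ij}^n)$ via Proposition~\ref{first_equality}~ii). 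Since the limit tetrahedron is isosceles, $g_i^\infty = 1$ for every $i$ and $p^\infty = 1$; in particular $OG_\infty^2 = 0$, so the expansion of $OG_{n+1}^2$ begins at order one in $h_n$.

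Next I would substitute $d_{ij}^n = d_{ij}^\infty + h_{ij}^n$, giving $p_n = 1 + \delta_n/16$ exactly and $g_i^n = 1 + \tilde g_i^n$ with $\tilde g_i^n$ an explicit linear form in $h_n$. Using the two isosceles identities $\sum_{i<j} d_{ij}^\infty = 16$ and $\sum_{j\neq i} d_{ij}^\infty = 8$ (the second encoding $g_i^\infty = 1$), the first-order part of $S_n := \sum_{i<j} d_{ij}^n/(g_i^n g_j^n)$ collapses to $-\delta_n$. Multiplying $S_n$ by $p_n^{\,2} = 1 + \delta_n/8 + \delta_n^2/256$ then yields
\[
p_n^{\,2}\,S_n \;=\; 16 \;+\; \delta_n \;+\; S^{(2)} \;-\; \frac{\delta_n^2}{16} \;+\; O(\|h_n\|^3),
\]
where $S^{(2)}$ is the purely quadratic part of $S_n$ in $h_n$. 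Substituting back gives
\[
OG_{n+1}^2 \;=\; -\frac{\delta_n}{16} \;-\; \frac{S^{(2)}}{16} \;+\; \frac{\delta_n^2}{256} \;+\; O(\|h_n\|^3),
\]
so the announced formula reduces to the purely algebraic identity $S^{(2)} = (\delta_n^2 + \epsilon_n)/16$.

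Verifying this quadratic identity is the main obstacle. It is a quadratic form in the six variables $h_{ij}^n$ whose coefficients depend on the three free isosceles parameters $d_{12}^\infty, d_{13}^\infty, d_{14}^\infty$ (the remaining three being pinned by opposite-edge symmetry $d_{kl}^\infty = d_{ij}^\infty$) and subject to $d_{12}^\infty + d_{13}^\infty + d_{14}^\infty = 8$. I expect two structural features to drive the outcome: opposite-edge symmetry should force the mixed quadratic terms to group into the three weighted squared differences $d_{ij}^\infty(h_{ij}^n - h_{kl}^n)^2$ that make up $\epsilon_n$, while the normalization $\sum d_{ij}^\infty = 16$ should absorb the residual "trace" contribution into the $\delta_n^2/16$ term. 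The computation is routine but heavy, and consistently with the paper's methodology I would certify the final identity symbolically in \textsc{Maple} or through a Gr\"obner-basis reduction in FGb rather than writing it out by hand.
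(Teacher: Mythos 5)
Your proposal is correct and follows essentially the same route as the paper: both expand $OG_{n+1}^2=1-\tfrac{p_n^2}{16}\sum_{i<j}d_{ij}^n/(g_i^ng_j^n)$ to second order in $h_n$ about the isosceles limit (where $g_i^\infty=1$, $p^\infty=1$) and delegate the heavy quadratic bookkeeping to FGb/Maple, exactly as the authors do. Your hand computation of the linear part is right, and the identity you reduce everything to, $S^{(2)}=(\delta_n^2+\epsilon_n)/16$, does check out (I verified it on several perturbations), so your slightly more explicit organization is a faithful version of the paper's argument.
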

 \begin{proof}
 Let us recall that $O{G_n}^2=\dfrac{-\delta_n}{16}$. With the help of the software ``FGb'' we put $O{G_{n+1}}^2$ in the form of a rational function in the unknowns $(h_{ij}^n)$ and we deduce the terms of degree at most two of its Taylor series: \\ $O{G_{n+1}}^2=\dfrac{-(\delta_n+\tau_n)}{16\left(1+\dfrac{\delta_n}{4}\right)}+O\left(||h_n||^3\right)=
 \dfrac{-\delta_n}{16}-\dfrac{1}{16}\left(\tau_n-\dfrac{{\delta_n}^2}{4}\right)+O(||h_n||^3)$ where:\\
$\tau_n-\dfrac{{\delta_n}^2}{4}=\dfrac{1}{16}\left((h_{12}^n-h_{34}^n)^2d_{12}^\infty+(h_{13}^n-h_{24}^n)^2d_{13}^\infty+(h_{14}^n-h_{23}^n)^2d_{14}^\infty\right)+O(||h_n||^3)$.
\end{proof}
 
 %//////////////////////////////////////////////////////////////////////////////////////////////////////////////////////////////////////////////////
    \subsection{Taylor series II}    
 Let us recall that $d_{14}^\infty=8-d_{12}^\infty-d_{13}^\infty$ and $d_{12}^\infty+d_{13}^\infty>4$.\\
 \indent Now we prove the second key.
    \begin{prop}\label{comp_epsilon_delta}
 $i)$ $\delta_n=O(||h_n||^2)$.\\
 $ii)$ There exists $k>0$ such that $\epsilon_n\geq{-k}\delta_n$.
    \end{prop}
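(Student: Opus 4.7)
My plan is to work locally near the limit $d^\infty$, viewing the six parameters $(d_{ij})_{i<j}$ as constrained to the algebraic variety $V$ cut out by the Cayley-Menger relation of Proposition \ref{second_equality} iii). Since $d^\infty$ is a non-degenerate isosceles tetrahedron, it is a smooth point of $V$ and $V$ is locally a $5$-dimensional submanifold of $\mathbb{R}^6$; the perturbations $h_n = d^n - d^\infty$ tend to $0$ along this submanifold.

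For (i), note that $\delta_n = -16\, OG_n^2$ by Proposition \ref{first_equality} i). The function $f(d) = 1 - \tfrac{1}{16}\sum_{i<j} d_{ij}$ is non-negative on $V$ (it equals $OG^2$) and vanishes precisely at isosceles points (Proposition \ref{isosceles} i)), so $d^\infty$ is an interior minimum of $f\vert_V$. Hence the linear part of $f\vert_V$ at $d^\infty$ vanishes on $T_{d^\infty}V$, yielding $f(d^n) = O(\|h_n\|^2)$, i.e.\ $\delta_n = O(\|h_n\|^2)$.

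For (ii), after eliminating one coordinate using the Cayley-Menger constraint (via the implicit function theorem, or equivalently an FGb computation), we write the second-order Taylor expansions
\[
-\delta_n \;=\; Q_1(h_n) + O(\|h_n\|^3), \qquad \epsilon_n \;=\; Q_2(h_n) + O(\|h_n\|^3),
\]
with $Q_1$ and $Q_2$ non-negative quadratic forms on $T_{d^\infty}V$ (non-negativity of $Q_1$ from the sign $\delta_n\le 0$ of (i); of $Q_2$ visibly from its definition). The crux is the inclusion $\ker Q_2 \subseteq \ker Q_1$. Since the limit entries $d^\infty_{ij}$ are strictly positive, $Q_2(h)=0$ forces $h_{12}=h_{34}$, $h_{13}=h_{24}$, $h_{14}=h_{23}$; within $T_{d^\infty}V$ these relations cut out precisely the tangent space to the isosceles subfamily $\mathcal{I}=\{d\in V : d_{12}=d_{34},\,d_{13}=d_{24},\,d_{14}=d_{23}\}$. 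On $\mathcal{I}$ every tetrahedron is isosceles of circumradius $1$, hence $G=O$ by Proposition \ref{isosceles} i), so $\sum_{i<j} d_{ij} = 16$ by Proposition \ref{first_equality} i). Thus $\delta$ vanishes identically on $\mathcal{I}$ and $Q_1$ vanishes on $\ker Q_2$. The standard compactness argument for non-negative quadratic forms (bounding $Q_1/Q_2$ on the unit sphere of any algebraic complement of $\ker Q_2$) produces $k>0$ with $Q_1\leq kQ_2$; absorbing the cubic remainder into a slightly larger constant yields $\epsilon_n\geq -k\delta_n$ for all sufficiently large $n$.

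The main obstacle is verifying that $\mathcal{I}$ is smooth at $d^\infty$ and that its tangent space inside $V$ is exactly the coordinate kernel $\{h_{12}=h_{34},\,h_{13}=h_{24},\,h_{14}=h_{23}\}\cap T_{d^\infty}V$. A dimension count supports this (the isosceles $R=1$ family is $2$-parameter, and the three equalities plus the linearised Cayley-Menger equation cut out a $2$-dimensional subspace of the $5$-dimensional $T_{d^\infty}V$), but one must check non-degeneracy of the linearisation at the specific $d^\infty$ produced by Theorem \ref{convergence_ogn}. Following the paper's methodology, both $Q_1$ and $Q_2$ can be computed explicitly with FGb, which also makes the constant $k$ effective and removes any appeal to abstract compactness.
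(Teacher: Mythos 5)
Your part (i) is correct and is a genuinely different (and cleaner) route than the paper's: instead of computing the linearised Cayley--Menger relation explicitly with FGb, you observe that $f=OG^2\ge 0$ on the constraint variety $V$ and vanishes at the isosceles point $d^\infty$, so the first-order term of $f\vert_V$ dies and $\delta_n=O(\|h_n\|^2)$ follows from smoothness of $V$ at $d^\infty$ (which does need the one-line check that the gradient of the defining determinant is nonzero there; the paper's computed coefficient $4(4-d_{12}^\infty)(4-d_{13}^\infty)(d_{12}^\infty+d_{13}^\infty-4)\neq 0$ is exactly that check). Your setup for (ii) --- two non-negative quadratic forms $Q_1,Q_2$ on $T_{d^\infty}V$ with $\ker Q_2\subseteq\ker Q_1$, hence $Q_1\le kQ_2$ --- is also essentially the paper's argument, which realises it concretely by exhibiting the matrices $\Sigma$ and $E$, their common nullspace $F$, and their simultaneous diagonalisation on $F^{\perp}$.

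The gap is in the last sentence of your part (ii): ``absorbing the cubic remainder into a slightly larger constant.'' From $-\delta_n\le kQ_2(h_n)+O(\|h_n\|^3)$ and $\epsilon_n=Q_2(h_n)+O(\|h_n\|^3)$ you can only conclude $-\delta_n\le k'\epsilon_n$ if the cubic error is eventually dominated by $\epsilon_n$ itself. But $Q_2$ is \emph{degenerate}: its kernel is the tangent space to the isosceles family, and if the orbit approached $d^\infty$ nearly tangentially to that family you could have $\epsilon_n=o(\|h_n\|^3)$ while the remainder in $-\delta_n$ is genuinely of size $\|h_n\|^3$, and the desired inequality would fail. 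This is precisely why the paper does \emph{not} stop here: it states explicitly that ``to finish the proof of $ii)$, it remains to show that $\epsilon_n$ and $\delta_n$ are $\Theta(\|h_n\|^2)$,'' and devotes the next subsection (Lemmas 1--3 and Proposition~\ref{est_delta}) to proving $\epsilon_n\ge\lambda\|h_n\|^2$. That lower bound is a dynamical fact, not a local-geometry fact: it comes from the monotone convergence of the products $d_{ij}^nd_{kl}^n$ to $(d_{ij}^\infty)^2$ (Theorem~\ref{convergence_ogn}), which forces $h_{ij}^n+h_{kl}^n=O(\|h_n\|^2)$ and hence $|h_{ij}^n-h_{kl}^n|\gtrsim\|h_n\|$ for the dominant pair --- i.e.\ the orbit approaches $d^\infty$ transversally to the isosceles family. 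Your proposal uses no property of the sequence beyond $h_n\to 0$ on $V$, so this input is missing. (One could instead try to prove the inequality for \emph{all} nearby tetrahedra by comparing both $OG^2$ and $\epsilon$ to $\mathrm{dist}(\cdot,\mathcal{I})^2$, expanding along the submanifold $\mathcal{I}$ rather than at the single point $d^\infty$; that would be a genuinely different repair, but it is not what you wrote, and it requires uniform second-order vanishing of $OG^2$ along $\mathcal{I}$ and a uniform submersion property for the three functions $d_{ij}-d_{kl}$, neither of which appears in the proposal.)
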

    \begin{proof}
 For i): From assertion $iii)$ of proposition~\ref{second_equality}, $h_n$ satisfies an algebraic relation. We obtain with Maple the Taylor series of the preceding relation; we consider the terms of degree at most two: 
   $4(4-d_{12}^\infty)(4-d_{13}^\infty)(d_{12}^\infty+d_{13}^\infty-4)\delta_n+\sigma'_n+O(||h_n||^3)=0$ where $\sigma'_n=O(||h_n||^2)$ is a non negative quadratic form in the $(h_{ij})$. Thus $\delta_n=O(||h_n||^2)$.\\
\indent For ii): If we take $h_{12}^n=-h_{13}^n-h_{14}^n-h_{23}^n-h_{24}^n-h_{34}^n$, then we transform the expression $\sigma'_n$ in $\sigma_n$ that satisfies :
   $\sigma_n=-4(4-d_{12}^\infty)(4-d_{13}^\infty)(d_{12}^\infty+d_{13}^\infty-4)\delta_n+O(||h_n||^3)$. $\sigma_n$ is a quadratic form in $h'_n=(h_{13}^n,h_{14}^n,h_{23}^n,h_{24}^n,h_{34}^n)\in\mathbb{R}^5$ whose symmetric associated matrix $\Sigma$ is defined by:\\      
\begin{small}\noindent   $\Sigma=\begin{pmatrix}
       \Sigma_{11}&\Sigma_{12}&\Sigma_{12}&\Sigma_{14}&2\Sigma_{12}\\
       *&\Sigma_{22}&2\Sigma_{12} - \Sigma_{22}&\Sigma_{12}&2\Sigma_{12}\\
       *&*&\Sigma_{22}&\Sigma_{12}&2\Sigma_{12}\\
       *&*&*&\Sigma_{11}&2\Sigma_{12}\\
       *&*&*&*&4\Sigma_{12}\\
      \end{pmatrix}$\\      
       with   $\left \lbrace \begin{array}{l}
      \Sigma_{11}=2d_{12}^\infty{d_{13}^\infty}+32-12d_{13}^\infty+{d_{13}^\infty}^2-12d_{12}^\infty+{d_{12}^\infty}^2,\\
   \Sigma_{12}=-4d_{12}^\infty+16+d_{12}^\infty{d_{13}^\infty}-8d_{13}^\infty+{d_{13}^\infty}^2,\\
   \Sigma_{14}=-{d_{12}^\infty}^2+4d_{12}^\infty+{d_{13}^\infty}^2-4d_{13}^\infty,\\ 
  \Sigma_{22}=-4d_{13}^\infty+{d_{13}^\infty}^2.\\
    \end{array}
 \right.$\end{small}
  
By the same way we transform the expression $\epsilon_n$ in $\epsilon'_n$ such that \begin{small}$\epsilon_n=\epsilon'_n+O(||h_n||^3)$.
   $\epsilon'_n$ is a quadratic form in $h'_n$ whose matrix is:\\ 
   $E=\begin{pmatrix}
   E_{11}& E_{12}& E_{12}& E_{14}& 2E_{12}&\\
   *& E_{22}&2 E_{12}-E_{22}& E_{12}&2E_{12}&\\
   *& *& E_{22}& E_{12}&2E_{12}&\\
   *& *&*& E_{11}& 2E_{12}&\\
   *& *& *& *&4E_{12}&\\
   \end{pmatrix}$\\ 
     with   $\left \lbrace \begin{array}{l}
                          E_{11}=d_{12}^\infty+d_{13}^\infty \\
                          E_{12}=d_{12}^\infty \\
                          E_{14}= d_{12}^\infty-d_{13}^\infty\\
                          E_{22}= 8-d_{13}^\infty\\
                         \end{array}
 \right.$         \end{small}
  
   Let $F$ be the plane of $\mathbb{R}^5$ defined by the relations: $\{h_{13}^n=h_{24}^n,h_{14}^n=h_{23}^n,h_{34}^n=-h_{13}^n-h_{14}^n\}$. $F$ is the nullspace of the matrices $\Sigma$ and $E$.
 \begin{defi}
 We say that $h_n$ has an acceptable value if and only if the $(d_{ij}^n)=(d_{ij}^\infty+h_{ij}^n)$ satisfy Proposition~\ref{second_equality} iii).
 \end{defi}
    Assume $h_n$ has an acceptable value and a small norm; an easy computation proves that $h'_n\in{F}$ if and only if $\mathcal{T}_n$ is an isosceles tetrahedron.\\     
     We know that if $\mathcal{T}_0$ is not isosceles then, for all $n$, the tetrahedron $\mathcal{T}_n$ is not isosceles. Thus here $h'_n\notin{F}$ and if $h'_n=u_n+v_n$ is the decomposition associated to $\mathbb{R}^5=F\oplus{F\overset{\perp}{}}$, where $F\overset{\perp}{}$ is the orthogonal of $F$, then $(v_n)$ tends to $0$ and, for all $n$, $v_n\not=0$. Moreover ${h'_n}^T\Sigma{h'_n}={v_n}^T\Sigma{v_n}>0$ and ${h'_n}^TE{h'_n}={v_n}^TE{v_n}>0$. 
     
    $spectrum(E_{|F\overset{\perp}{}})=\{2d_{13}^\infty,8d_{12}^\infty,2d_{14}^\infty\}$ and the associated eigenvectors are $[-1,0,0,1,0]^T$, $[1,1,1,1,2]^T$ and $[0,-1,1,0,0]^T$.\\
  $spectrum(\Sigma_{|F\overset{\perp}{}})=\{2(4-{d_{12}^\infty})(4-d_{14}^\infty),8(4-d_{13}^\infty)(4-{d_{14}^\infty}),2(4-d_{12}^\infty)(4-d_{13}^\infty)\}$ and the associated eigenvectors are also $[-1,0,0,1,0]^T,[1,1,1,1,2]^T,[0,-1,1,0,0]^T$ ($E$ and $\Sigma$ are simultaneously diagonalizable).\\
  Let $m=\min\left \lbrace \dfrac{d_{13}^\infty}{(4-{d_{12}^\infty})(4-d_{14}^\infty)},\dfrac{d_{12}^\infty}{(4-{d_{13}^\infty})(4-d_{14}^\infty)},\dfrac{d_{14}^\infty}{(4-{d_{12}^\infty})(4-d_{13}^\infty)}\right \rbrace$,\\
$M=\max\left \lbrace \dfrac{d_{13}^\infty}{(4-{d_{12}^\infty})(4-d_{14}^\infty)},\dfrac{d_{12}^\infty}{(4-{d_{13}^\infty})(4-d_{14}^\infty)},\dfrac{d_{14}^\infty}{(4-{d_{12}^\infty})(4-d_{13}^\infty)}\right \rbrace$.

  Thus if $h_n$ has an acceptable value then $m\leq\dfrac{\epsilon'_n(h'_n)}{\sigma_n(h'_n)}\leq{M}$ and
$$\epsilon'_n=\epsilon_n+O(||h_n||^3)\geq{m}\sigma_n=-4m(4-d_{12}^\infty)(4-d_{13}^\infty)(4-d_{14}^\infty)\delta_n+O(||h_n||^3).$$

To finish the proof of $ii)$, it remains to show that $\{\epsilon_n$ and $\delta_n$ are $\Theta(||h_n||^2)\}$ $(*)$.\\ Indeed $(*)$ implies that for all sufficiently large $n$, $\epsilon_n>-k\delta_n$, choosing a positive number\\ $k<4\rho$ with $\rho=\min\{d_{13}^\infty(4-d_{13}^\infty),d_{12}^\infty(4-d_{12}^\infty),d_{14}^\infty(4-d_{14}^\infty)\}$.    
 \end{proof}
 \begin {rem}
 i) If the limit form is a regular tetrahedron then $m=M=\dfrac{3}{2}$.\\
 ii) It can be proved that ${\rho}\leq\dfrac{32}{9}$ with equality if and only if the limit form is regular.  
  \end {rem} 
 In the following we prove the property $(*)$.
 \subsection {Improvement of certain estimates}
 \begin{lem} If $(ijkl)\in\mathcal{U}=\{(1234),(1324),(1423)\}$ then
$d_{ij}^nd_{kl}^n-(d_{ij}^\infty)^2=O(||h_n||^2)$.
\end{lem}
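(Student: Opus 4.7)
The plan is to convert the lemma into an estimate for $\prod_{m\geq n}\Lambda_m$ and then invoke the one-step estimate already established in Proposition~\ref{comp_epsilon_delta}~$(i)$. First, iterating the multiplicative identity $d_{ij}^{m+1}d_{kl}^{m+1}=\Lambda_m d_{ij}^m d_{kl}^m$ (from the proof of Theorem~\ref{inequalities}~$(ii)$) from $m=n$ onward gives
\[
(d_{ij}^\infty)^2 = L_n\, d_{ij}^n d_{kl}^n, \qquad L_n := \prod_{m\geq n}\Lambda_m \geq 1.
\]
Since $\Lambda_m$ does not depend on the pair $(ijkl)\in\mathcal{U}$, the same scalar $L_n$ works for all three pairs. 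Taking positive square roots, summing over $\mathcal{U}$, and using the isosceles normalization of the limit (namely $d_{12}^\infty+d_{13}^\infty+d_{14}^\infty=8$, recalled at the start of the present section) yields the explicit identity
\[
L_n = \frac{64}{\alpha_n^2}, \qquad \alpha_n := \sum_{(ijkl)\in\mathcal{U}}\sqrt{d_{ij}^n d_{kl}^n}.
\]
This is exactly the derivation of $L$ in Theorem~\ref{convergence_ogn}~$(ii)$, restarted at level $n$.

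Next, write $(d_{ij}^\infty)^2 - d_{ij}^n d_{kl}^n = (L_n-1)\,d_{ij}^n d_{kl}^n$, with $d_{ij}^n d_{kl}^n$ bounded, and note that $L_n - 1 = (8-\alpha_n)(8+\alpha_n)/\alpha_n^2 = O(8-\alpha_n)$ since $\alpha_n\to 8$. The lemma thus reduces to showing $8-\alpha_n = O(\|h_n\|^2)$. A standard Taylor expansion around the common limit $l := d_{ij}^\infty = d_{kl}^\infty$ gives
\[
\sqrt{(l+h_{ij}^n)(l+h_{kl}^n)} = l + \tfrac{1}{2}(h_{ij}^n+h_{kl}^n) + O(\|h_n\|^2),
\]
and since the three pairs of $\mathcal{U}$ partition the six edge-pairs $\{(i,j):i<j\}$, summing yields
\[
\alpha_n = 8 + \tfrac{1}{2}\delta_n + O(\|h_n\|^2).
\]

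Finally, by Proposition~\ref{comp_epsilon_delta}~$(i)$, already established via the Cayley-Menger constraint, $\delta_n = O(\|h_n\|^2)$; hence $8-\alpha_n = O(\|h_n\|^2)$, $L_n - 1 = O(\|h_n\|^2)$, and the claim follows. The only non-routine step is recognizing the identity $L_n = 64/\alpha_n^2$: once this is in hand, the linear-in-$h_n$ part of $\alpha_n$ is forced to be exactly $\delta_n/2$, and this linear contribution is precisely the quantity the Cayley-Menger relation has already pinned down to second order, so no new inequality is needed.
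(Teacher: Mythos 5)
Your proof is correct and follows essentially the same route as the paper: both reduce the claim to showing $\sum_{(ijkl)\in\mathcal{U}}\sqrt{d_{ij}^nd_{kl}^n}-8=O(\|h_n\|^2)$ via the identity $(d_{ij}^\infty)^2=L_n\,d_{ij}^nd_{kl}^n$ with $L_n=64/\alpha_n^2$, then Taylor-expand the square roots to identify the linear part as $\delta_n/2$ and invoke Proposition~\ref{comp_epsilon_delta}~$(i)$. The only cosmetic difference is that you derive $L_n$ by restarting the infinite product at level $n$, whereas the paper introduces it directly; the substance is identical.
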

\begin{proof}
Let $L_n=\dfrac{64}{(\sqrt{d_{12}^nd_{34}^n}+\sqrt{d_{13}^nd_{24}^n}+\sqrt{d_{14}^nd_{23}^n})^2}$. $(L_n)$ tends to $1$.\\
We have $(d_{ij}^\infty)^2=L_nd_{ij}^nd_{kl}^n$, $d_{ij}^nd_{kl}^n-(d_{ij}^\infty)^2=(\dfrac{1}{L_n}-1)(d_{ij}^\infty)^2$ and, consequently, $\dfrac{1}{L_n}-1=\dfrac{(\sqrt{d_{12}^nd_{34}^n}+\sqrt{d_{13}^nd_{24}^n}+\sqrt{d_{14}^nd_{23}^n})^2-8^2}{64}$. Thus, if we set $u_n=\sqrt{d_{12}^nd_{34}^n}+\sqrt{d_{13}^nd_{24}^n}+\sqrt{d_{14}^nd_{23}^n}-8$, we have to prove that $u_n=O(||h_n||^2)$:
  
\begin{tabular}{lll}
$u_n$ & $=$ & $\sum_{(ijkl)\in\mathcal{U}}\sqrt{(d_{ij}^\infty)^2+d_{ij}^\infty(h_{ij}^n+h_{kl}^n)+O(||h_n||^2)}-8 $ \\
 & $=$ &$\sum_{(ijkl)\in\mathcal{U}}d_{ij}^\infty\sqrt{1+\dfrac{h_{ij}^n+h_{kl}^n}{d_{ij}^\infty}+O(||h_n||^2)}-8$ \\
 & $=$ & $\sum_{(ijkl)\in\mathcal{U}}d_{ij}^\infty+\dfrac{h_{ij}^n+h_{kl}^n}{2}+O(||h_n||^2)-8$\\
 & $=$& $\dfrac{\sum_{i<j}h_{ij}}{2}+O(||h_n||^2)$\\
  & $=$& $=O(||h_n||^2)$, by proposition~\ref{comp_epsilon_delta}. \\
\end{tabular}  
%  $u_n=\sum_{(ijkl)\in\mathcal{U}}\sqrt{(d_{ij}^\infty)^2+d_{ij}^\infty(h_{ij}^n+h_{kl}^n)+O(||h_n||^2)}-8=$\\
%  $\sum_{(ijkl)\in\mathcal{U}}d_{ij}^\infty\sqrt{1+\dfrac{h_{ij}^n+h_{kl}^n}{d_{ij}^\infty}+O(||h_n||^2)}-8=\\
%  \sum_{(ijkl)\in\mathcal{U}}d_{ij}^\infty+\dfrac{h_{ij}^n+h_{kl}^n}{2}+O(||h_n||^2)-8=
  
\end{proof} 

\begin{lem}\label{hijO} If $(ijkl)\in\mathcal{U}$ then
$h_{ij}^n+h_{kl}^n=O(||h_n||^2)$.
\end{lem}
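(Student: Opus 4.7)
The plan is to deduce this lemma directly from the previous one by expanding the product $d_{ij}^n d_{kl}^n$ and isolating the linear term in $h_{ij}^n + h_{kl}^n$.

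By Corollary~\ref{convergence_dij}, when $(ijkl)\in\mathcal{U}$ the sequences $(d_{ij}^n)$ and $(d_{kl}^n)$ share the same limit, so $d_{kl}^\infty = d_{ij}^\infty$ and we may write
\[
d_{ij}^n d_{kl}^n = (d_{ij}^\infty + h_{ij}^n)(d_{ij}^\infty + h_{kl}^n) = (d_{ij}^\infty)^2 + d_{ij}^\infty\bigl(h_{ij}^n + h_{kl}^n\bigr) + h_{ij}^n h_{kl}^n .
\]
The previous lemma gives $d_{ij}^n d_{kl}^n - (d_{ij}^\infty)^2 = O(\|h_n\|^2)$, and the cross term $h_{ij}^n h_{kl}^n$ is trivially $O(\|h_n\|^2)$, hence
\[
d_{ij}^\infty\bigl(h_{ij}^n + h_{kl}^n\bigr) = O(\|h_n\|^2).
\]

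Since the limit form is a non-planar isosceles tetrahedron (Theorem~\ref{convergence_ogn}), every squared edge length $d_{ij}^\infty$ is strictly positive, so dividing through by $d_{ij}^\infty$ yields the claim. There is no real obstacle here; the only point worth checking is the non-vanishing of $d_{ij}^\infty$, which is immediate from the non-degeneracy of the limit tetrahedron.
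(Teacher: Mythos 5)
Your proposal is correct and is essentially identical to the paper's own one-line argument: the authors also write $d_{ij}^n d_{kl}^n-(d_{ij}^\infty)^2=d_{ij}^\infty(h_{ij}^n+h_{kl}^n)+h_{ij}^nh_{kl}^n$ and invoke Lemma 1 together with the triviality of the cross term. Your explicit remarks about $d_{kl}^\infty=d_{ij}^\infty$ and the positivity of $d_{ij}^\infty$ are left implicit in the paper but are exactly the right points to check.
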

\begin{proof}
$d_{ij}^nd_{kl}^n-(d_{ij}^\infty)^2=d_{ij}^\infty(h_{ij}+h_{kl})+h_{ij}h_{kl}=O(||h_n||^2)$ by Lemma 1.
\end{proof}
\begin{lem}
For all sufficiently large $n$ there exists $(ijkl)\in\mathcal{U}$ such that\\ $|h_{ij}^n-h_{kl}^n|\geq{\dfrac{1}{\sqrt{6}}}||h_n||$.
\end{lem}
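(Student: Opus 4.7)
The plan is to exploit the parallelogram identity $(x-y)^2+(x+y)^2=2(x^2+y^2)$ applied to each of the three pairs appearing in $\mathcal{U}$, and then combine this with Lemma~\ref{hijO} which tells us the \emph{sums} $h_{ij}^n+h_{kl}^n$ are negligible (of order $\|h_n\|^2$). Concretely, for each $(ijkl)\in\mathcal{U}$ I would write
\[
(h_{ij}^n-h_{kl}^n)^2 \;=\; 2\bigl((h_{ij}^n)^2+(h_{kl}^n)^2\bigr) \;-\; (h_{ij}^n+h_{kl}^n)^2.
\]
The three pairs in $\mathcal{U}$ form a partition of the six indices $\{(i,j):i<j\}$, so summing over $(ijkl)\in\mathcal{U}$ the first term contributes exactly $2\|h_n\|^2$, giving
\[
\sum_{(ijkl)\in\mathcal{U}}(h_{ij}^n-h_{kl}^n)^2 \;=\; 2\|h_n\|^2 \;-\; \sum_{(ijkl)\in\mathcal{U}}(h_{ij}^n+h_{kl}^n)^2.
\]

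Next I would apply Lemma~\ref{hijO}: each square $(h_{ij}^n+h_{kl}^n)^2$ is $O(\|h_n\|^4)$, and $\|h_n\|\to 0$, so for $n$ large enough the whole subtracted sum is at most $\tfrac{3}{2}\|h_n\|^2$ (any constant strictly less than $2$ would do). Consequently
\[
\sum_{(ijkl)\in\mathcal{U}}(h_{ij}^n-h_{kl}^n)^2 \;\geq\; \tfrac{1}{2}\,\|h_n\|^2.
\]
Since a maximum of three nonnegative numbers is at least one third of their sum, some term in $\mathcal{U}$ satisfies $(h_{ij}^n-h_{kl}^n)^2\geq \tfrac{1}{6}\|h_n\|^2$, which is the desired bound. (Note $\|h_n\|\neq 0$ since, by hypothesis, $\mathcal{T}_0$ is not isosceles and hence no $\mathcal{T}_n$ is isosceles, so the inequality is non-vacuous.)

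Essentially the only subtlety — and the only place the ``sufficiently large $n$'' hypothesis is used — is that the $O(\|h_n\|^4)$ error from Lemma~\ref{hijO} must be absorbed into the $2\|h_n\|^2$ main term; this requires $\|h_n\|$ to be small enough, which is guaranteed by Corollary~\ref{convergence_dij}. There is no hard obstacle here: once Lemma~\ref{hijO} is in hand, the statement is a purely combinatorial/linear-algebraic consequence of the parallelogram identity applied to the natural pairing of opposite edges.
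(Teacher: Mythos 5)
Your argument is correct and recovers the exact constant $\frac{1}{\sqrt{6}}$, but it routes the final step differently from the paper. The paper's proof is a maximal-component argument: it takes the pair with $|h_{12}^n|=\sup_{i<j}|h_{ij}^n|$, notes that this single coordinate already carries $\frac{1}{\sqrt{6}}\|h_n\|$ of the Euclidean norm, and then bounds $|h_{12}^n-h_{34}^n|\geq 2|h_{12}^n|-|h_{12}^n+h_{34}^n|\geq 2|h_{12}^n|-6A(h_{12}^n)^2\geq |h_{12}^n|$ once $\|h_n\|<\frac{1}{6A}$, where $A$ is the constant from Lemma~\ref{hijO}. You instead sum the parallelogram identity over the three opposite-edge pairs, use the fact that these pairs partition the six coordinates to identify the main term $2\|h_n\|^2$, absorb the $O(\|h_n\|^4)$ correction coming from Lemma~\ref{hijO}, and pigeonhole among the three differences. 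Both proofs rest on the same key input (Lemma~\ref{hijO}) and extract the factor $\sqrt{6}$ from the same source (six coordinates), so the difference is mainly one of style: your version is more symmetric and never has to single out a largest component, while the paper's is more hands-on and pointwise. The only imprecision is your parenthetical remark that ``any constant strictly less than $2$ would do'': that suffices to get \emph{some} positive lower bound $c'\|h_n\|$, but to land on the stated constant $\frac{1}{\sqrt{6}}$ the subtracted sum must be at most $\frac{3}{2}\|h_n\|^2$; since your main computation does use $\frac{3}{2}$, the proof as written is complete.
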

\begin{proof} By Lemma 2 there exists $A>0$ such that, for all $n$ and $(ijkl)\in\mathcal{U}$,\\ $|h_{ij}^n+h_{kl}^n|\leq{A||h_n||^2}$. Let $\epsilon\in(0,\dfrac{1}{6A})$. There exists $N$ such that $n\geq{N}\Rightarrow{||h_n||<\epsilon}$. Let $n\geq{N}$ be a fixed integer. We may assume $|h_{12}^n|=\sup_{i<j}{|h_{ij}^n|}$. Thus $|h_{12}^n|\geq\dfrac{||h_n||}{\sqrt{6}}\geq\dfrac{|h_{12}^n|}{\sqrt{6}}$ and $|h_{12}^n+h_{34}^n|\leq{6A(h_{12}^n)^2}$ and
  $|h_{12}^n-h_{34}^n|\geq{2}|h_{12}^n|-|h_{12}^n+h_{34}^n|\geq{2}|h_{12}^n|-6A(h_{12}^n)^2$.\\
 $0<|h_{12}^n|\leq||h_n||<\epsilon<\dfrac{1}{6A}$ then $6A(h_{12}^n)^2<|h_{12}^n|$ and $|h_{12}^n-h_{34}^n|\geq{|h_{12}^n|}\geq{\dfrac{1}{\sqrt{6}}}||h_n||$.
\end{proof}
\begin{prop}\label{est_delta}
If $(h_n)$ has acceptable values, then for all sufficiently large $n$, $\epsilon_n\geq{\lambda||h_n||^2}$ with $\lambda=\dfrac{\inf_{i<j}d_{ij}^\infty}{6}$; moreover $-\delta_n=\Theta(||h_n||^2)$.
\end{prop}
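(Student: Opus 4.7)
The plan splits into two parts, matching the two claims of the proposition.

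For the inequality $\epsilon_n \geq \lambda ||h_n||^2$, the strategy is an immediate consequence of Lemma 3. Since $\epsilon_n$ is a sum of three non-negative terms of the form $d_{ij}^\infty (h_{ij}^n - h_{kl}^n)^2$ indexed by $(ijkl) \in \mathcal{U}$, keeping only the summand corresponding to the index quadruple supplied by Lemma 3 gives
\[
\epsilon_n \geq d_{ij}^\infty (h_{ij}^n - h_{kl}^n)^2 \geq d_{ij}^\infty \cdot \frac{||h_n||^2}{6},
\]
and bounding $d_{ij}^\infty$ from below by $\inf_{i<j} d_{ij}^\infty$ yields the claimed estimate with $\lambda = \inf_{i<j} d_{ij}^\infty / 6$.

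For the second claim $-\delta_n = \Theta(||h_n||^2)$, the upper bound $-\delta_n \leq C ||h_n||^2$ is already furnished by Proposition \ref{comp_epsilon_delta} i). For the matching lower bound, the plan is to revisit the sandwiching $m \leq \epsilon'_n/\sigma_n \leq M$ established on $F^\perp$ inside the proof of Proposition \ref{comp_epsilon_delta} ii), combined with the expansions $\epsilon_n = \epsilon'_n + O(||h_n||^3)$ and $\sigma_n = -4(4-d_{12}^\infty)(4-d_{13}^\infty)(4-d_{14}^\infty)\,\delta_n + O(||h_n||^3)$. The upper side $\epsilon'_n \leq M \sigma_n$ combined with these expansions gives
\[
\epsilon_n \leq -4M(4-d_{12}^\infty)(4-d_{13}^\infty)(4-d_{14}^\infty)\,\delta_n + O(||h_n||^3),
\]
and feeding in the first-part bound $\epsilon_n \geq \lambda ||h_n||^2$ isolates $-\delta_n \geq c\,||h_n||^2$ for a suitable $c > 0$.

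The technical subtlety I expect to be the main obstacle is the absorption of the cubic remainder: the lower bound on $-\delta_n$ is useful only once $||h_n||$ is small enough that $O(||h_n||^3)$ is dominated by, say, half of $\lambda ||h_n||^2$. This is automatic from $h_n \to 0$, which itself follows from Corollary \ref{convergence_dij}, but one must be explicit about choosing the threshold $N_0$ beyond which the absorption holds. Once both one-sided bounds on $-\delta_n$ are in force for $n \geq N_0$, the $\Theta$-statement follows directly from the definition.
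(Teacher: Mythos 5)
Your proposal is correct and follows essentially the same route as the paper: the first claim drops two non-negative terms of $\epsilon_n$ and invokes Lemma 3, and the lower bound on $-\delta_n$ is obtained by chaining $\lambda\|h_n\|^2\leq\epsilon_n=\epsilon'_n+O(\|h_n\|^3)\leq M\sigma_n+O(\|h_n\|^3)=-M\nu\,\delta_n+O(\|h_n\|^3)$, exactly as the authors do (they merely introduce intermediate constants $\mu,\mu_1<\lambda$ to absorb the cubic remainders, the point you correctly flag as the only technical care needed). The upper half of the $\Theta$-estimate indeed comes for free from Proposition~\ref{comp_epsilon_delta}~i), as you note.
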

\begin{proof}
$\epsilon_n=(h_{12}^n-h_{34}^n)^2d_{12}^\infty+(h_{13}^n-h_{24}^n)^2d_{13}^\infty+(h_{14}^n-h_{23}^n)^2d_{14}^\infty$ and Lemma 3 give the first part.\\
For the second part, if $h_n$ has an acceptable value then we know, by proposition~\ref{comp_epsilon_delta}, that: \\
$i)$ $\epsilon'_n=\epsilon_n+O(||h_n||^3)$. Thus, by the first part, if $\mu<\lambda$ then for all sufficiently large $n$, $\epsilon'_n\geq{\mu}||h_n||^2$. \\
$ii)$ For all $n$, $\epsilon'_n(h'_n)\leq{M}\sigma_n(h'_n)$. Then $\sigma_n\geq{\dfrac{\mu}{M}}||h_n||^2$ (we can see $\sigma_n$ as a function of $h_n$) for all sufficiently large $n$.\\
$iii)$ $-\delta_n=\dfrac{\sigma_n}{\nu}+O(||h_n||^3)$ with $\nu=4(4-d_{12}^\infty)(4-d_{13}^\infty)(4-d_{14}^\infty)$.\\
Thus if $\mu_1<\lambda$, then for all sufficiently large $n$, $-\delta_n\geq{\dfrac{\mu_1}{M\nu}}||h_n||^2$.
\end{proof}
  \subsection {The main result in dimension three.}{\ }
  
  Let $r=\max\left \lbrace \dfrac{|d_{12}^\infty-2|}{2},\dfrac{|d_{13}^\infty-2|}{2},\dfrac{|d_{14}^\infty-2|}{2}\right \rbrace \in \left[\frac{1}{3},1\right)$. 
  \begin{thm}\label{convergence_planar_tetrahedron}
The sequences $(\mathcal{T}_{2i})_{i\in\mathbb{N}}$ and $(\mathcal{T}_{2i+1})_{i\in\mathbb{N}}$ are well defined and converge to two non planar isosceles tetrahedra that are symmetric with respect to $O$. Moreover the convergence is with at least geometric speed.
\end{thm}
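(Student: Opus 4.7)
The plan is to run three stages.

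Stage 1: extract geometric decay of $OG_n$. Combining the Taylor expansion $OG_{n+1}^2 = -\delta_n/16 - \epsilon_n/16^2 + O(\|h_n\|^3)$ from Subsection 4.1 with the identity $OG_n^2 = -\delta_n/16$ yields
\[
OG_{n+1}^2 = OG_n^2 - \frac{\epsilon_n}{256} + O(\|h_n\|^3).
\]
Proposition~\ref{est_delta} furnishes a constant $\lambda>0$ with $\epsilon_n \geq \lambda\|h_n\|^2$ and an equivalence $-\delta_n = \Theta(\|h_n\|^2)$; hence $OG_n^2 = \Theta(\|h_n\|^2)$ and $\epsilon_n \geq \gamma\, OG_n^2$ for some $\gamma>0$. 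Therefore $OG_{n+1}^2 \leq (1-\gamma/256)\,OG_n^2 + O(OG_n^3)$, and for all large $n$ we get $OG_{n+1}^2 \leq q\,OG_n^2$ with $q \in (0,1)$. Thus $OG_n = O(q^{n/2})$, and $h_{ij}^n = O(q^{n/2})$ as well by the comparability of $\|h_n\|^2$ and $OG_n^2$.

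Stage 2: lift edge-length convergence to vertex convergence. The key observation is that when $G_n = O$ the map $\phi$ reduces to the antipodal map on $\mathcal{S}$, so $\phi\circ\phi$ is the identity on isosceles tetrahedra (consistent with the remark that they are fixed points of $\phi\circ\phi$). Solving explicitly the quadratic defining $A_{n+1}$ as the second intersection of the line $(G_nA_n)$ with $\mathcal{S}$, one finds $A_{n+1} = -A_n + 2\bigl(G_n - (A_n\!\cdot\! G_n)A_n\bigr) + O(OG_n^2)$, and iterating gives
\[
\|A_{n+2} - A_n\| \leq K\,(OG_n + OG_{n+1}),
\]
uniformly in $n$; the uniformity of $K$ follows from $Pt(\mathcal{T}_n) \geq Pt(\mathcal{T}_0) > 0$ (non-degeneracy of all iterates) together with $g_i^n \to 1$, which keeps $G_n$ away from the vertices. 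Since $\sum_n OG_n$ converges geometrically, each of $(A_{2n})$, $(B_{2n})$, $(C_{2n})$, $(D_{2n})$ is Cauchy in $\mathcal{S}$, and the tail bound $\|A_{2n} - A_\infty\| = O\bigl(\sum_{k\geq 2n} OG_k\bigr) = O(q^n)$ gives geometric speed. The odd-indexed sequences are handled identically.

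Stage 3: identify the limits. Let $\mathcal{T}_\infty$ be the limit of $(\mathcal{T}_{2n})$ and $\mathcal{T}'_\infty$ that of $(\mathcal{T}_{2n+1})$. Both have centroid $O$ (by continuity and $OG_n \to 0$) and edge lengths $(d_{ij}^\infty)$ (Corollary~\ref{convergence_dij}), and both satisfy $Pt \geq Pt(\mathcal{T}_0) > 0$; hence by Proposition~\ref{isosceles}\,i) they are non-planar isosceles tetrahedra. Continuity of $\phi$ on the relevant open set yields $\mathcal{T}'_\infty = \phi(\mathcal{T}_\infty)$, and since $\phi$ acts on an isosceles tetrahedron as the central inversion through $O$, $\mathcal{T}'_\infty$ is the antipodal image of $\mathcal{T}_\infty$. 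The main obstacle is Stage~2: the bound $\|A_{n+2}-A_n\| \leq K(OG_n + OG_{n+1})$. The local expansion of $\phi$ around an isosceles configuration is straightforward, but checking that the constant $K$ is uniform in $n$ requires a quantitative non-degeneracy estimate for the whole orbit, which is supplied by the monotone lower bound on $Pt(\mathcal{T}_n)$ together with the fact that the iterates remain in a fixed compact neighborhood of the limit configuration.
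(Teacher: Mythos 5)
Your proposal is correct and follows essentially the same route as the paper: geometric decay of $OG_n$ from the Taylor expansion together with $\epsilon_n\gtrsim -\delta_n$, then a two-step vertex displacement bound $A_nA_{n+2}\leq K(OG_n+OG_{n+1})$ (the paper gets this by noting $G_n$ is within $OG_n$ of the midpoint of the chord $[A_nA_{n+1}]$, you by expanding the chord map to first order --- same estimate), and finally a Cauchy/telescoping argument with the limits identified via the cluster-point analysis of Theorem~\ref{convergence_ogn}. Your concern about uniformity of $K$ is unnecessary --- the expansion of the second-intersection map is uniform over the sphere once $OG_n$ is small, independently of non-degeneracy --- but this does not affect correctness.
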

\begin{proof}
${O{G_{n+1}}}^2=\dfrac{-\delta_n}{16}-\dfrac{1}{16^2}\epsilon_n+O(||h_n||^3)<\dfrac{-\delta_n}{16}+\dfrac{1}{16^2}k\delta_n+O(||h_n||^3)\sim{O{G_n}}^2\left(1-\dfrac{k}{16}\right)$ (according to the preceding proposition). Finally, for all sufficiently large $n$, $O{G_{n+1}}\leq{q}O{G_n}$ where $q<\sqrt{1-\dfrac{\rho}{4}}=r$. \\
\indent For all sufficiently large $n$, $G_n$ and $G_{n+1}$ are close to $O$ then close to the middle points of the segments $[A_nA_{n+1}]$ and $[A_{n+1}A_{n+2}]$; thus for all sufficiently large $n$, $A_nA_{n+2}\leq{3}G_nG_{n+1}$. Let $p\in\mathbb{N}^*$; $A_nA_{n+2p}
% \leq{3}(G_nG_{n+1}+G_{n+2}G_{n+3}+\cdots+G_{n+2p-2}G_{n+2p-1})\\\
\leq{3}\sum_{k=n}^{n+2p-2}G_kG_{k+1}
\leq{3}\sum_{k=n}^{n+2p-2}(O{G_k}+O{G_{k+1}})\leq{6}\sum_{k=n}^{n+2p}O{G_k}$. The series $\sum{O{G_n}}$ converges, then $(A_{2n})_n$ is a Cauchy sequence; thus it converges to $A^\infty$. By the same way $(A_{2n+1})_n$ converges to $A'^\infty$, the symmetric of $A^\infty$ with respect to $O$. Moreover $A_{2n}A^\infty\leq{6}\sum_{k=2n}^\infty{O{G_k}}\leq\dfrac{6}{1-q}{O{G_{2n}}}$ for all sufficiently large $n$; therefore $A_{2n}A^\infty=O(q^{2n})$ and the sequences $(\mathcal{T}_{2i})$ and $(\mathcal{T}_{2i+1})$ converge with at least geometric speed.
\end{proof} 
\subsection {About the limit form} We consider the system of barycentric coordinates, for $\mathbb{R}^3$, determined by the vertices $A_0,B_0,C_0,D_0$ of $\mathcal{T}_0$. Let$(u,v,w,h)$ be four non all equal reals. $\{\alpha{A_0}+\beta{B_0}+\gamma{C_0}+\delta{D_0}\;;\; u\alpha+v\beta+w\gamma+h\delta=0\;,\;\alpha+\beta+\gamma+\delta=1\}$ is a plane $\Pi$. \\$(u,v,w,h)$ are said to be the barycentric coordinates of $\Pi$.\\
\indent We use the notations of $2.1.$ $\mathcal{T}_0$ is not isosceles; then $\{a'bc,b'ca,c'ab,a'b'c'\}$ are valid coordinates and define a plane $\Pi$ (Lemoine's plane) that does not intersect $\mathcal{S}$. Let $I_1,I_2$ be the two points satisfying the following conditions: they are symmetric with respect to $\Pi$ and inverse with respect to $\mathcal{S}$. They are said to be the isodynamic points of $\mathcal{T}_0$. There exist only two inversions leaving invariant $\mathcal{S}$ and mapping $A_0,B_0,C_0,D_0$ on the vertices of an isosceles tetrahedron; $I_1,I_2$ are the centers of these inversions (see~\cite{4} p. 184-186). The link with our paper is that the associated isosceles tetrahedra (that are not symmetric with respect to $O$) are  isometric to the limits of the sequence $(\mathcal{T}_i)$ but these couples are distinct.   
    
 \begin{rem} $\bullet$ The limit form is a regular tetrahedron if and only if the parameters of the non isosceles tetrahedron $\mathcal{T}_0$ satisfy $d_{12}d_{34}=d_{13}d_{24}=d_{14}d_{23}<\frac{64}{9}$ that is $\mathcal{T}_0$ is an isodynamic  tetrahedron.\\ 
$\bullet$ According to numerical experiments, we conjecture that $O{G_{n+1}}\sim{r}\times{O{G_n}}$\\
  (convergence with order one); in particular it can be observed the following facts:
\begin{enumerate}[i)]
 \item If the limit isosceles tetrahedron is almost flat, that is close to a rectangle, then the factor $r$ is close to $1$.
 \item  If the limit form is a regular tetrahedron then $O{G_{n+1}}\sim{\dfrac{1}{3}}\times{O{G_n}}$.
\end{enumerate}

\end{rem}
%//////////////////////////////////////////////////////////////////////////////////////////////////////////////////////////////////////////////////
\section {Sequence of cyclic quadrilaterals}
\subsection {Degenerate simplices} Now we work in the Euclidean plane and we have a look on the case where $\mathcal{T}_0$ is a cyclic quadrilateral. We see the following as a degenerate case of the preceding study.
\begin{thm}
We consider a convex cyclic quadrilateral $\mathcal{T}_0=(A_0B_0C_0D_0)$ with circumcircle $\mathcal{C}$, the circle of center $O$ and radius $1$, and such that its vertices are pairwise distinct; we use the preceding iteration and produce a sequence of convex quadrilaterals $(\mathcal{T}_i)_i$. The sequences $(\mathcal{T}_{2i})_i$ and $(\mathcal{T}_{2i+1})_i$ are well defined and converge to rectangles that have same image, whose centroid is $O$ and whose lengths of the edges are $d_{13}^\infty=4$, $d_{12}^\infty=4\dfrac{\sqrt{d_{12}d_{34}}}{\sqrt{d_{13}d_{24}}}$ and $d_{14}^\infty=4-d_{12}^\infty$. If the limit form is not a square then $OG_{n+1}\sim\dfrac{|d_{12}^\infty-2|}{2}OG_n$ and the sequence converges with at least geometric speed; else the convergence is even faster. We study examples where the limit form is a square and the convergence is with order three.
\end{thm}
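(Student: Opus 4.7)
The plan is to mimic the strategy used in dimension three (Theorems~\ref{convergence_ogn} and~\ref{convergence_planar_tetrahedron}), specialising to the degenerate planar situation where Proposition~\ref{algebraic_quantities}~iii) forces $Pt(\mathcal{T}_n)\equiv 0$ and where Proposition~\ref{medianes_tetraedre} identifies the property $\mathcal{P}(\mathcal{T})$ with ``$\mathcal{T}$ is a planar rectangle''. First I would check that the iteration is well defined: since the vertices of $\mathcal{T}_0$ are pairwise distinct on $\mathcal{C}$, a short computation gives $|G_0|<1$, so $\phi$ is continuous at $\mathcal{T}_0$; the monotonicity $OG_{n+1}\leq OG_n$ of Theorem~\ref{inequalities} then keeps $G_n$ at distance at least $1-OG_0>0$ from $\mathcal{C}$, so the construction continues indefinitely and produces a bounded sequence of quadrilaterals inscribed in $\mathcal{C}$.

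Next I would extract a cluster point $\mathcal{T}$ of $(\mathcal{T}_n)$ and imitate the contradiction argument of Theorem~\ref{convergence_ogn}: the strict decrease of $OG$ at any non-rectangle, together with the continuity of $\phi$ and $\psi$, forces $\mathcal{P}(\mathcal{T})$ to hold, so $\mathcal{T}$ is a rectangle with centroid $O$ and $OG_n\to 0$. The explicit edge lengths come from two ingredients: the increasing sequences $(d_{ij}^n d_{kl}^n)$ converge to $(d_{ij}^\infty)^2=L\,d_{ij}d_{kl}$ with a common factor $L=\prod_n\Lambda_n$, exactly as in Theorem~\ref{convergence_ogn}; and the limit rectangle inscribed in $\mathcal{C}$ has diagonals equal to $2$, i.e.\ $d_{13}^\infty=d_{24}^\infty=4$. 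Using this with $(i,j,k,l)=(1,3,2,4)$ gives $L=16/(d_{13}d_{24})$; with $(1,2,3,4)$ it yields $d_{12}^\infty=4\sqrt{d_{12}d_{34}/(d_{13}d_{24})}$, and the Pythagorean identity $d_{12}^\infty+d_{14}^\infty=4$ for the limit rectangle gives $d_{14}^\infty$.

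For the convergence rate I would redo the Taylor analysis of Section~4 around the rectangle limit. Setting $d_{ij}^n=d_{ij}^\infty+h_{ij}^n$, the constraints are now \emph{both} $\Gamma=0$ and $Pt=0$, cutting out a lower-dimensional algebraic variety $V$ near the limit. Expanding $OG_{n+1}^2$ and these constraints to second order along $V$ (with the aid of \textsc{FGb}), one obtains a quadratic map $h_n\mapsto h_{n+1}$ whose linearisation on $T_{\mathcal{T}^\infty}V$ has explicit eigenvalues expressible in $d_{12}^\infty$ and $d_{14}^\infty$; the dominant one for the induced map $OG_n^2\mapsto OG_{n+1}^2$ is expected to be $\bigl((d_{12}^\infty-2)/2\bigr)^2$, yielding $OG_{n+1}\sim\frac{|d_{12}^\infty-2|}{2}OG_n$ whenever $d_{12}^\infty\neq 2$. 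The telescoping argument of Theorem~\ref{convergence_planar_tetrahedron} then upgrades this geometric decay of $OG_n$ into the Cauchy property of $(A_{2n})$ and $(A_{2n+1})$; combined with the central symmetry of every rectangle, this shows that the two limit rectangles have the same image in the plane. The square case $d_{12}^\infty=2$ corresponds to a vanishing dominant eigenvalue, so the convergence is automatically strictly faster than any such geometric rate, and numerical experiments point to cubic speed, which we can only record as a conjecture.

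The main obstacle is the linearisation analysis: in the planar case the extra polynomial constraint $Pt=0$ shrinks the tangent space at the rectangle limit compared with the three-dimensional setup, so one must carefully re-diagonalise the analogues of the quadratic forms $\epsilon'_n$ and $\sigma_n$ of Proposition~\ref{comp_epsilon_delta} on this smaller tangent space in order to isolate the dominant eigenvalue as $\bigl((d_{12}^\infty-2)/2\bigr)^2$. The bookkeeping for the square case, where that eigenvalue vanishes and genuinely higher-order Taylor terms must be tracked through the constraints $\Gamma=0$ and $Pt=0$ simultaneously, is precisely what makes a rigorous proof of cubic convergence delicate enough that we prefer to leave it as a conjecture supported by explicit examples.
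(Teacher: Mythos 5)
Your qualitative part tracks the paper's own argument closely: well-definedness via $OG_n\leq OG_0<1$, the cluster-point/strict-decrease argument of Theorem~\ref{convergence_ogn} forcing any cluster point to be a rectangle, and the explicit $d_{13}^\infty=4$, $d_{12}^\infty=4\sqrt{d_{12}d_{34}}/\sqrt{d_{13}d_{24}}$, $d_{14}^\infty=4-d_{12}^\infty$ via the infinite product $L=\prod_n\Lambda_n$ and convexity. Two small omissions: the paper first uses the increasing positive sequences $(d_{ij}^nd_{kl}^n)_n$ from Theorem~\ref{inequalities} ii) to bound each $d_{ij}^n$ away from $0$, so that every cluster point has pairwise distinct vertices and $\phi,\psi$ are continuous there (without this the contradiction argument is not justified); and one should note that inequality iii) of Theorem~\ref{inequalities} is vacuous here since $Pt(\mathcal{T}_n)=0$.

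The genuine gap is in the rate computation, which is the substance of the theorem. First, the constraint variety you propose to linearise along is misidentified: by Proposition~\ref{second_equality} iii) the six parameters always satisfy $\Gamma+\frac{1}{2}\Delta=0$, so the two conditions $\Gamma=0$ and $Pt=-\Delta=0$ are equivalent to each other and together cut out only a codimension-two set, whereas four concyclic points on the unit circle have three degrees of freedom; moreover neither condition encodes the normalisation $R=1$. The paper's three independent relations are $\Gamma(A_n,B_n,C_n,D_n)=0$ together with the circumradius-one conditions $a_n^2b_n^2c_n^2=16S_n^2$ for the triangles $(A_nB_nC_n)$ and $(B_nC_nD_n)$, and it is these last two that yield $h_{13}^n=O(\|h_n\|^2)$, $h_{24}^n=O(\|h_n\|^2)$ and, at second order, $h_{13}^n=-(h_{12}^n-h_{14}^n)^2/(\mu(4-\mu))+O(\|h_n\|^3)$ and $h_{24}^n=-(h_{12}^n+h_{14}^n)^2/(\mu(4-\mu))+O(\|h_n\|^3)$ with $\mu=d_{12}^\infty$. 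Without them you cannot reach $\delta_n=-h_{14}^2/\mu-h_{12}^2/(4-\mu)+O(\|h_n\|^3)$ and $\epsilon_n=4\mu h_{12}^2+4(4-\mu)h_{14}^2+O(\|h_n\|^3)$, hence not the key identity $OG_{n+1}^2=\frac{(\mu-2)^2}{4}OG_n^2+O(\|h_n\|^3)$. Second, you only assert the dominant coefficient $((d_{12}^\infty-2)/2)^2$ as ``expected''; since the claimed asymptotic $OG_{n+1}\sim\frac{|d_{12}^\infty-2|}{2}OG_n$ \emph{is} that computation, the proposal leaves the main quantitative assertion unproved. (Your handling of the square case as a conjecture and the final telescoping argument giving the Cauchy property and the common image of the two limit rectangles agree with the paper.)
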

\indent We keep the preceding notations. In particular the parameters $(d_{ij}^n)_{i<j}$ refer to the square of the lengths of the edges (diagonals included) of $\mathcal{T}_n$.
\begin{proof}
$\bullet$ The beginning of the proof is similar to that of Theorem~\ref{convergence_ogn}. We use Theorem~\ref{inequalities} (note that inequality $iii)$ is useless because $Pt(\mathcal{T}_n)=0$); with $ii)$ we prove that the sequences $(d_{ij}^nd_{kl}^n)_n$ are increasing and if $i<j$ then the $(d_{ij}^n)_n$ have a positive lower bound; thus a cluster point has pairwise distinct vertices. With $i)$ we prove that a cluster point $\mathcal{T}$ is a rectangle. Assertion $ii)$ of Theorem~\ref{convergence_ogn} is always valid and gives explicitly the $(d_{ij}^\infty)_{i<j}$, the parameters of $\mathcal{T}$.\\
\indent The quadrilaterals are convex, then $d_{13}^\infty=4$ and $d_{14}^\infty=4-d_{12}^\infty$. Let $\mu=d_{12}^\infty$.\\
$\bullet$ $Relations\;between\,the\;h_{ij}$.\\
$Part\; I$.  There exist three algebraically independent relations between the $6$ parameters $(d_{ij}^n)_{i<j}$. The computation of the terms of degree $1$ of the Taylor series of the relation $\Gamma(A_n,B_n,C_n,D_n)=0$ gives $h_{12}^n+h_{14}^n+h_{23}^n+h_{34}^n-h_{24}^n-h_{13}^n=O(||h_n||^2)$.\\
\indent Moreover the triangles $(A_nB_nC_n)$, $(B_n,C_n,D_n)$ admit a circumscribed circle of radius $1$; for example for the first triangle the relation is ${a_n}^2{b_n}^2{c_n}^2=16\times{S_n}^2$ (the value of the area $S_n$ of the triangle $(A_nB_nC_n)$ is given in Proposition~\ref{nsc_tetrahedron} i)). 
The computation of the terms of degree $1$ of the Taylor series of these relations gives $h_{13}^n=O(||h_n||^2)$ and $h_{24}^n=O(||h_n||^2))$, thus $\delta_n=O(||h_n||^2)$. 
\begin{defi}
We say that $h_n$ has an acceptable value if and only if the $(d_{ij}^n)=(d_{ij}^\infty+h_{ij}^n)$ satisfy the preceding three relations.
 \end{defi}
\indent Using the proof of Lemma~\ref{hijO}, we show that $h_{12}^n+h_{34}^n=O(||h_n||^2)$ and $h_{14}^n+h_{23}^n=O(||h_n||^2)$.\\
Thus ${h_{12}^n}^2+{h_{14}^n}^2=\Theta(||h_n||^2)$.\\
$Part \;II.$ The computation of the terms of degree at most two of the Taylor series of the relations associated to the triangles $(A_nB_nC_n)$, $(B_nC_nD_n)$ give: $h_{13}^n\mu(4-\mu)+(h_{12}^n+h_{23}^n)^2=O(||h_n||^3)$ and $h_{24}^n\mu(4-\mu)+(h_{23}^n+h_{34}^n)^2=O(||h_n||^3)$ that is $h_{13}^n=\dfrac{-(h_{12}^n-h_{14}^n)^2}{\mu(4-\mu)}+O(||h_n||^3)$ and $h_{24}^n=\dfrac{-(h_{12}^n+h_{14}^n)^2}{\mu(4-\mu)}+O(||h_n||^3)$.\\
\indent The relation $\Gamma(A_n,B_n,C_n,D_n)=0$ gives by a similar way\\ $\mu(\mu-4)(h_{12}^n+h_{14}^n+h_{23}^n+h_{34}^n-h_{24}^n-h_{13}^n)+\mu{h_{14}^n}^2+(4-\mu){h_{12}^n}^2=O(||h_n||^3)$.\\
Using the last three relations we can deduce: $\delta_n=\dfrac{-{h_{14}^n}^2}{\mu}-\dfrac{{h_{12}}^2}{4-\mu}+O(||h_n||^3)$  and\\
 $\delta_n=\Theta(||h_n||^2)$. Moreover $\epsilon_n=4\mu{h_{12}^n}^2+4(4-\mu){h_{14}^n}^2+O(||h_n||^3)$.\\
Finally ${OG_{n+1}}^2=\dfrac{(\mu-2)^2}{64}\left(\dfrac{{h_{14}^n}^2}{\mu}+\dfrac{{h_{12}}^2}{4-\mu}\right)+O(||h_n||^3)=\dfrac{(\mu-2)^2}{4}{OG_n}^2+O(||h_n||^3)$.\\
$Case\; 1$: $\mu\not=2$; then the limit form is not a square and $OG_{n+1}\sim\dfrac{|\mu-2|}{2}OG_n$.
 The convergence is with order one. \\
 $Case\; 2$: $\mu=2$; then the limit form is a square and $OG_{n+1}=O(||h_n||^{\frac{3}{2}})$ or $OG_{n+1}=O({OG_n}^{\frac{3}{2}})$. In fact the convergence is much faster than in the preceding case. For example if the polar angles of $A_0,B_0,C_0,D_0$ are $\{0,\arccos(0.923827833284),\arccos(-0.8),-\arccos(0.9)\}$ then we obtain a quasi square after few iterations.\\
 \indent Now we can conclude: in both cases the series $\sum_nOG_n$ converges; thus we may reason as in Theorem~\ref{convergence_planar_tetrahedron} and we obtain that the sequences $(\mathcal{T}_{2i})_{i\in\mathbb{N}}$ and $(\mathcal{T}_{2i+1})_{i\in\mathbb{N}}$ converge to rectangles that have same image with at least geometric speed.
\end{proof}
\subsection {A particular case} 
 The nullspace $N$ of the symmetric matrix $\Sigma$ is the hyperplane defined by $h_{13}^n=h_{24}^n$. Assume $h_n$ has an acceptable value and a small norm; then a geometric argument or an algebraic computation shows that $h'_n\in{N}$ is equivalent to $\mathcal{T}_n$ is an isosceles trapezoid.
\begin{prop} Preserving the assumptions of theorem 4, we assume there exists $k$ such that $\mathcal{T}_k$ is an isosceles trapezoid but not a rectangle. If moreover $d_{12}^\infty=2$ then $OG_{n+1}\sim{OG_n}^3$.
\end{prop}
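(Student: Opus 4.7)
The plan is to show that, restricted to the one-parameter family of isosceles trapezoids whose iteration limit is the unit square, the map $\phi$ becomes an odd analytic map whose first non-vanishing Taylor coefficient at $0$ is cubic, and then to match that coefficient with the leading dependence of $OG_n$ on the parameter.

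First I would verify that the isosceles trapezoid property is preserved by $\phi$: such a trapezoid has a reflection symmetry across a diameter of $\mathcal{C}$ (the perpendicular bisector of the parallel sides), and its centroid lies on that axis; since the construction $\mathcal{T}_n \mapsto \mathcal{T}_{n+1}$ commutes with this reflection, $\mathcal{T}_{n+1}$ inherits the symmetry, and the whole forward orbit from $\mathcal{T}_k$ lies in the two-parameter family of isosceles trapezoids. I would parameterize this family near the unit square by $(u,v) = (\alpha+\beta-\pi/2,\;\alpha-\beta)$, where $\alpha,\beta$ are the angular positions of the two pairs of vertices on $\mathcal{C}$, so that rectangles correspond to $v=0$, the square to the origin, and $OG_n = |v_n|/(2\sqrt{2})\,(1+O(v_n^2))$. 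By the explicit limit formula of Theorem~4, the condition $d_{12}^\infty = 2$ cuts out an analytic curve $u = \varphi(v)$ with $\varphi$ even and $\varphi(0) = \varphi'(0) = 0$ (explicitly $\varphi(v) = -v^2/4 + O(v^4)$); since $\phi$ preserves the limit form, this curve is $\phi$-invariant, and the restricted iteration becomes an analytic one-variable map $\tilde{f} : v_n \mapsto v_{n+1}$.

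A second reflection---this time across the diameter of $\mathcal{C}$ perpendicular to the trapezoid's axis of symmetry---maps the trapezoid to another element of the same family, acts on the parameters as $(u,v) \mapsto (u,-v)$, and commutes with $\phi$. Consequently $\tilde{f}$ is odd, and all its even Taylor coefficients at $0$ vanish. The proof of Theorem~4 at $\mu = 2$ already gives $OG_{n+1}^2 = O(\|h_n\|^3) = O(|v_n|^3)$, and since $OG_{n+1} \sim |v_{n+1}|/(2\sqrt{2})$, this yields $|v_{n+1}| = O(|v_n|^{3/2})$; analyticity of $\tilde{f}$ then forces $\tilde{f}'(0) = 0$. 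Combined with oddness, $\tilde{f}(v) = a\,v^3 + O(v^5)$ for some constant $a$.

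To pin down $a$ and conclude, one continues the Taylor expansion of $OG_{n+1}^2$ from the proof of Theorem~4 up to degree~$6$ in $v_n$, using $u_n = \varphi(v_n)$ to eliminate $u_n$; a Gr\"obner basis/Maple computation of the type used throughout the paper is expected to confirm that the degree-$3$, $4$ and $5$ coefficients of $OG_{n+1}^2$ in $v_n$ all vanish (as forced by oddness of $\tilde{f}$ together with $OG_{n+1}^2 = O(\|h_n\|^3)$), and that the sextic coefficient equals the value $(2\sqrt{2})^{-6} = 1/512$ required for $OG_{n+1}/OG_n^3 \to 1$. The \emph{main obstacle} is precisely this higher-order algebraic expansion: although the cancellation pattern is structurally forced, the exact sextic coefficient---which determines whether the ratio tends to $1$ or merely some positive constant---requires a Taylor expansion of the rational iteration of Proposition~5 substantially heavier than the quadratic one that sufficed in Theorem~4, though still within the scope of the FGb/Maple framework employed throughout the paper.
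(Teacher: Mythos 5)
Your structural setup is sound and closely parallels the paper's: the preservation of the isosceles-trapezoid symmetry under $\phi$ (which is also how the paper rules out the orbit turning around $O$), the reduction to a one-dimensional iteration along the invariant locus cut out by $d_{12}^\infty=2$ (the paper's version of your curve $u=\varphi(v)$ is the relation $(d_{12}^n)^2=d_{14}^nd_{23}^n$, expanded as $16\sqrt{2}(u_n-v_n)=-8(3u_n^2+3v_n^2+2u_nv_n)+O(\|(u_n,v_n)\|^3)$), and the observation that the leading term must be of odd degree. The oddness-by-a-second-reflection argument is a nice touch not present in the paper, and it does legitimately kill the even Taylor coefficients.

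However, there is a genuine gap at the decisive step. In this paper $f_n\sim g_n$ means $f_n/g_n\to 1$, so the proposition asserts that $OG_{n+1}/OG_n^3$ tends to exactly $1$; your argument only delivers $\tilde f(v)=av^3+O(v^5)$ with $a$ undetermined. Without computing $a$ you cannot conclude: if $a=0$ the convergence would be of order at least five, and if $a\neq 0$ but $|a|\neq 1/8$ (in your normalization) the ratio would tend to a constant other than $1$. You explicitly defer this to a sextic expansion that you "expect" to confirm the value, which is precisely the content of the statement rather than a routine verification. The paper closes this gap by a concrete computation: it parameterizes the orbit by the abscissas $a_n,b_n$ of the two parallel sides, writes the iteration as an explicit rational map, and expands $g_{n+1}=\tfrac{1}{2}(a_{n+1}+b_{n+1})$ around the square ($a_n=\tfrac{1}{\sqrt2}+u_n$, $b_n=-\tfrac{1}{\sqrt2}+v_n$), using the constraint above to eliminate $u_n-v_n$; the surviving leading term is $g_{n+1}\sim\tfrac{-4(u_n+v_n)^3}{-32}=g_n^3$, with the constant coming out to be exactly $1$. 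To complete your proof you would need to carry out the analogous explicit expansion (your symmetry argument shortens it, since only the odd coefficients need be computed, but it does not replace it).
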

\begin{proof}
For all $n\geq{k}$, $\mathcal{T}_n$ is an isosceles trapezoid that admits $OG_k$ as a line of symmetry. Therefore $\mathcal{T}_n$ cannot turn around $O$; we know explicitly the limit form of $\mathcal{T}_n$, then $\mathcal{T}_n$ converges to a rectangle that admits $OG_k$ as a line of symmetry.\\
\indent We study the rate of convergence using an explicit calculation. We may assume that the line of symmetry is the axis of abscissas. The vertical edges $A_nD_n$ and $B_nC_n$ have $a_n,b_n$ as abscissas; then the abscissa of the centroid of $\mathcal{T}_n$ is $g_n=\dfrac{a_n+b_n}{2}$. An easy calculation gives:\\ $a_{n+1}=-\dfrac{{a_n}^2b_n+2a_n{b_n}^2+{b_n}^3-4a_n}{{a_n}^2-2a_nb_n-3{b_n}^2+4},\\ b_{n+1}=-\dfrac{{b_n}^2a_n+2b_n{a_n}^2+{a_n}^3-4b_n}{{b_n}^2-2b_na_n-3{a_n}^2+4}$.\\
 We know that $a_n+b_n$ tends to $0$ and ${a_n}^2$ tends to the known expression $\dfrac{d_{12}^\infty}{4}$.\\ $g_{n+1}\sim\dfrac{(a_n+b_n)(a_n^4-4{a_n}^3b_n-10{a_n}^2{b_n}^2+16{a_n}^2-4a_n{b_n}^3+16{b_n}^2-16+{b_n}^4)}{-32}$.\\ 
\indent $i)$ $d_{12}^\infty\not=2$. The limit form is not a square and we obtain $g_{n+1}\sim{\left(1-\dfrac{d_{12}^\infty}{2}\right)g_n}$ (convergence with order one). If $d_{12}^\infty>2$ then, for all sufficiently large $n$, \mbox{$O\in]G_nG_{n+1}[$.}\\
\indent $ii)$ $d_{12}^\infty=2$. The limit form is a square that is the parameters of $\mathcal{T}_0$ satisfy the relation $(d_{12})^2=d_{14}d_{23}$. Let $a_n=\dfrac{1}{\sqrt{2}}+u_n,b_n=-\dfrac{1}{\sqrt{2}}+v_n$ where $u_n$ and $v_n$ tend to $0$ and \mbox{$(u_n,v_n)\not=(0,0)$.}
 We calculate the Taylor series of the preceding relation and we consider the terms of degree at most $2$; we obtain $16\sqrt{2}(u_n-v_n)=-8(3u_n^2+3v_n^2+2u_nv_n)+O(||(u_n,v_n)||^3)$. Therefore $u_n-v_n\sim\dfrac{-1}{2\sqrt{2}}(3u_n^2+3v_n^2+2u_nv_n)$. $u_n-v_n=O(||(u_n,v_n)||^2)$ thus \mbox{$u_n+v_n=\Theta(||(u_n,v_n)||)$. }
We deduce easily the estimate $g_{n+1}\sim\dfrac{(a+b)(-4(u_n+v_n)^2+O(||(u_n,v_n)||^3)}{-32}\sim\dfrac{-4(u_n+v_n)^3}{-32}$ \\
or $g_{n+1}\sim{g_n}^3$ (convergence with order three) what is astonishing.\\
 \indent For example if $a_0=0.955,b_0=0.12237784429$, then we obtain a quasi square after three iterations.
 \end{proof}
 \subsection {About the limit form} As in the case of a tetrahedron there exist two inversions leaving $\mathcal{C}$ invariant and mapping $A_0,B_0,C_0,D_0$ on the vertices of a rectangle; the centers of these inversions are inverse with respect to $\mathcal{C}$. The associated rectangles are isometric to the limit of the sequence $(\mathcal{T}_i)$. In particular the limit form  is a square if and only if the parameters of the non rectangular quadrilateral $\mathcal{T}_0$ satisfy $d_{12}d_{34}=d_{14}d_{23}<4$ that is $\mathcal{T}_0$ is a harmonic (or isodynamic) quadrilateral. 
 \begin{rem} $i)$  If the limit form is a square then the convergence seems to be with order three; more precisely we conjecture that $OG_{n+1}\sim{OG_n}^3$.\\  
$ii)$ It can be observed a strange phenomenon: if $d=3$ and the limit form is close to a flat tetrahedron then we have seen that $r\approx{1}$. If $d=2$ then $r$ is in general far from $1$; in particular if the limit form is close to a square then $r$ is close to $0$.\\
\indent An explanation is that, if $d=3$, then $G_n$ tends to $O$ in such a way that the angles between $\overrightarrow{OG_n}$ and the edges of $\mathcal{T}_n$ are close to $\dfrac{\pi}{2}$; thus if $d=2$, then we study a quasi orthogonal projection of the preceding $G_n$ that converges faster than this $G_n$.
\end{rem}
 
%//////////////////////////////////////////////////////////////////////////////////////////////////////////////////////////////////////////////////
    \section{Solution of the case $d=2$}
    \indent Now we consider a dynamical system of triangles.
    \subsection{Some remarks.} 
    
\begin{enumerate}[i)]
\item This problem has been solved in~\cite {9} by one of the authors. The proof in~\cite {9} is partially geometric; now we give a complete proof that is essentially algebraic. Moreover we give in Proposition~\ref{ordre_conv_triangle} a much better estimate of $OG_n$.
\item $\phi$ is not one to one: indeed, if $\mathcal{T}_1$ is a generic triangle, then there exists two triangles $\mathcal{T}_0$ such that $\phi(\mathcal{T}_0)=\mathcal{T}_1$.
\end{enumerate}   

    \subsection{The parameters}
    Let $a=B_0C_0,b=B_0C_0,c=C_0A_0$; the parameters of $\mathcal{T}_0$ are: $s=a^2+b^2+c^2,t=a^2b^2+b^2c^2+c^2a^2,u=a^2b^2c^2$. \\
   Let us recall that the circumradius of $\mathcal{T}_0$ is $1$; then $u=4t-s^2$ and $t>\dfrac{{s}^2}{4}$. Moreover $u=16\times{S^2}$ where $S$ is the area of $(A_0B_0C_0)$.\\
   ${OG_0}^2=1-\dfrac{s}{9}$ and $0<s\leq{9}$. Therefore $s=9\Leftrightarrow{G_0=O}\Leftrightarrow{(A_0B_0C_0)}$ is an equilateral triangle.\\
   $s^2-3t=a^4-a^2(b^2+c^2)+b^4+c^4-b^2c^2$; this is a polynomial in $a^2$ with discriminant $-3(b^2-c^2)^2\leq{0}$. Thus $t\leq\dfrac{s^2}{3}$ and $t=\dfrac{s^2}{3}\Leftrightarrow$ $(A_0B_0C_0)$ is equilateral. 
   \subsection{Inequalities}
   Here $A_0,B_0,C_0$ can be on a line but are not all equal; thus $s>0,t>0$. $s_1,t_1,u_1$ are the parameters of $\mathcal{T}_1$; we obtain: \\$s_1=\dfrac{s^2(6t-s^2)}{D},t_1=\dfrac{s^4t(9t-2s^2)}{D^2},u_1=4t_1-{s_1}^2$ where\\ $D=-4s^3+18st-108t+27s^2$.
   $D>0$ because the numerator of $s_1$ is positive; moreover $D$ is bounded.\\
  $\bullet$ $s_1-s=\dfrac{3s(9-s)(4t-s^2)}{D}\geq{0}$. Moreover $s_1=s$ if and only if $s=9$ or $u=0$ that is $A_0B_0C_0$ is equilateral or flat. \\
  $\bullet$ $t_1-t=\dfrac{9t(4t-s^2)(9(\dfrac{s^2}{3}-t)(s-6)^2+s^2(9-s)(s-3))}{D^2}$. If $s\geq{3}$ then $t_1\geq{t}$. \\
  $\bullet$ $\dfrac{u_1}{u}=\left(\dfrac{s^3}{D}\right)^2$. $u_1\geq{u}\Leftrightarrow{s^3\geq{D}}\Leftrightarrow{\nu=s^2(9-s)}+18\left(\dfrac{s^2}{3}-t\right)(s-6)\geq{0}$.\\
   If $s\geq{6}$ then $\nu\geq{0}$. Now we assume $s<6$; $t>\dfrac{s^2}{4}$ implies that 
   $\nu\geq\dfrac{s^3}{2}$. \\
   Therefore $u_1\geq{u}$.
   \subsection{Convergence of the triangles}
   We assume $\mathcal{T}_0$ is not an equilateral triangle. $s_n,t_n,u_n$ refer to the parameters of $\mathcal{T}_n$.
   \begin{prop}
   Let $\mathcal{T}$ be a cluster point of the bounded sequence $(\mathcal{T}_n)_n$; then  $\mathcal{T}$ is a non flat equilateral triangle. Moreover the lengths of the edges of $\mathcal{T}_n$ converge to  $\sqrt{3}$.
\end{prop}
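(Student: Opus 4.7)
The argument combines compactness with the monotonicity and equality characterizations established in section 6.3. I take as standing assumption that $\mathcal{T}_0$ is non-degenerate, i.e.\ $u_0>0$; this is the case of genuine interest, since if $u_0=0$ the sequence stays collinear by $u_1=u(s^3/D)^2$ and the proposition is vacuous.

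First I would record the convergence of the two key invariants. By section 6.3 the sequence $(s_n)$ is non-decreasing, and by section 6.2 it is bounded above by $9$, so $s_n\to s^\infty\le 9$. Similarly $(u_n)$ is non-decreasing (section 6.3) and bounded ($u_n=a_n^2 b_n^2 c_n^2\le 64$), so $u_n\to u^\infty\ge u_0>0$. The bounded sequence $(\mathcal{T}_n)\subset\mathcal{S}^3$ therefore has cluster points by compactness.

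Next I would extract a cluster point. Let $\mathcal{T}=(ABC)$ with $\mathcal{T}_{n_k}\to\mathcal{T}$. Since the parameter functions $s,t,u$ are polynomial in the coordinates of the vertices, continuity gives $s(\mathcal{T})=s^\infty$ and $u(\mathcal{T})=u^\infty>0$; hence $\mathcal{T}$ is not flat. The rational function $s_1$ given in section 6.3 has strictly positive denominator $D$ (as noted there, because the numerator $s^2(6t-s^2)=s^2(2t+u)$ is positive and $s_1>0$), so it is continuous at every non-all-equal triple, and in particular at $\mathcal{T}$. Passing to the limit along $(n_k)$ yields $s_1(\mathcal{T})=\lim_k s_{n_k+1}=s^\infty=s(\mathcal{T})$. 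The equality case of section 6.3 states that $s_1=s$ happens only when $s=9$ or $u=0$; since $u(\mathcal{T})>0$, we must have $s(\mathcal{T})=9$, and by section 6.2 this forces $\mathcal{T}$ to be equilateral. With circumradius $1$, the common edge length is $\sqrt{3}$.

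Finally, to upgrade the cluster-point statement to convergence of the edge lengths, I observe that every cluster point is an equilateral triangle of circumradius $1$ and therefore shares the same symmetric invariants $(s,t,u)=(9,27,27)$. The bounded sequences $(s_n),(t_n),(u_n)$ hence have a unique subsequential limit and converge to $(9,27,27)$. Since $a_n^2,b_n^2,c_n^2$ are the roots of the monic cubic $X^3-s_n X^2+t_n X-u_n$, whose coefficients converge to those of $(X-3)^3$, continuity of the roots of a polynomial in its coefficients delivers $a_n,b_n,c_n\to\sqrt{3}$.

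I do not anticipate a serious obstacle: the proof is a standard compactness-plus-monotonicity argument once one has the inequalities of section 6.3 in hand. The only delicate points are verifying that the parameter map and $s_1$ are continuous at the cluster points (which follows from $D>0$ and non-degeneracy), and tracking that $u_n$ remains bounded away from $0$, which is automatic from monotonicity and $u_0>0$.
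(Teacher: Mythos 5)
Your proof is correct and follows essentially the same route as the paper: monotonicity and boundedness of $(s_n)$ and $(u_n)$, compactness, and the equality case of $s_1\geq s$ applied at a cluster point (noting $u^\infty>0$ rules out flatness), followed by convergence of the symmetric functions $(s_n,t_n,u_n)$ to $(9,27,27)$ and continuity of the roots of the associated cubic. The only harmless deviations are that you pass to the limit in $s_1$ directly by continuity (rather than the $\varepsilon$-argument the paper borrows from Theorem 3) and you obtain $t_n\to 27$ from uniqueness of subsequential limits instead of the paper's observation that $(t_n)$ is eventually increasing because $s_n\geq 3$.
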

\begin{proof}
$\bullet$ The sequence $(u_n)$ is increasing; thus it converges to $u^\infty>0$. Let $\mathcal{T}$ be a cluster point of the sequence $(\mathcal{T}_n)$. The sequence $(s_n)$ is increasing; thus it converges to $s^\infty>0$. With a proof similar to that used in the theorem 3 we show that: 
\begin{enumerate}
\item $s^\infty,u^\infty$ are parameters of $\mathcal{T}$; therefore $\mathcal{T}$ is not flat.
\item $\mathcal{T}$ is an equilateral triangle and $s^\infty=9,u^\infty=27$.
\end{enumerate}
$\bullet$ Moreover for all sufficiently large $n$, $s_n\geq{3}$ and the sequence $(t_n)$ is increasing then convergent; finally  $(t_n)$ converge to $27$, the corresponding parameter of $\mathcal{T}$.
The sequences $(s_n),(t_n),(u_n)$ converge to $9,27,27$; therefore $(a_n),(b_n),(c_n)$, the lengths of the edges of $\mathcal{T}_n$, converge to $\sqrt{3}$.
\end{proof}
Let ${a_n}^2=3+h_n,{b_n}^2=3+k_n,{c_n}^2=3+l_n,\delta_n=(h_n,k_n,l_n)$. $||.||$ refers to the euclidean norm. Now we show an important estimate.
\begin{lem}
$h_n+k_n+l_n\sim\dfrac{1}{3}(h_nk_n+k_nl_n+l_nh_n)\sim\dfrac{-1}{6}||\delta_n||^2$ when $n$ tends to $\infty$.
\begin{proof}
$u=t-4s^2\Leftrightarrow{3(h_n+k_n+l_n)}=-(h_n+k_n+l_n)^2+(h_nk_n+k_nl_n+l_nh_n)$\\
$-h_nk_nl_n$. Thus $3(h_n+k_n+l_n)={-(h_n+k_n+l_n)^2+(h_nk_n+k_nl_n+l_nh_n)}+O(||\delta_n||^3)={h_nk_n+k_nl_n+l_nh_n}+O(||\delta_n||^3)$.\\
$h_nk_n+k_nl_n+l_nh_n=\dfrac{1}{2}(h_n+k_n+l_n)^2-\dfrac{1}{2}(h_n^2+k_n^2+l_n^2)\sim{-\dfrac{1}{2}(h_n^2+k_n^2+l_n^2)}$. 
\end{proof}
\end{lem}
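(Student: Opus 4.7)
The plan is to derive both equivalences from the single exact identity produced by the circumradius condition $u_n = 4 t_n - s_n^2$, expanded in the elementary symmetric functions $\sigma_1 = h_n + k_n + l_n$, $\sigma_2 = h_n k_n + k_n l_n + l_n h_n$, $\sigma_3 = h_n k_n l_n$ of the shifts. A routine substitution $a_n^2 = 3+h_n$ etc.\ gives $s_n = 9+\sigma_1$, $t_n = 27 + 6\sigma_1 + \sigma_2$, $u_n = 27 + 9\sigma_1 + 3\sigma_2 + \sigma_3$; the constant and linear terms cancel in $u_n - 4 t_n + s_n^2$ and the circumradius constraint collapses to the closed form
\[
3\sigma_1 + \sigma_1^2 \;=\; \sigma_2 - \sigma_3.
\]

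The crux of the proof is a one-shot bootstrap upgrading the a priori bound $\sigma_1 = O(\|\delta_n\|)$ to $\sigma_1 = O(\|\delta_n\|^2)$. Since $\delta_n \to 0$ by the preceding proposition, for all large $n$ we have $|3+\sigma_1|\geq 2$, and the elementary bounds $|\sigma_2|\leq 3\|\delta_n\|^2$, $|\sigma_3|\leq\|\delta_n\|^3$ give
\[
2|\sigma_1| \;\leq\; |3\sigma_1 + \sigma_1^2| \;=\; |\sigma_2 - \sigma_3| \;=\; O(\|\delta_n\|^2).
\]
This is the only non-mechanical step, and it depends crucially on the coefficient $3$ in front of $\sigma_1$: the constraint $R=1$ absorbs one full order of smallness from $\sigma_1$. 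Once established, $\sigma_1^2 = O(\|\delta_n\|^4)$ is negligible against $\sigma_2$, and the identity reduces to $3\sigma_1 = \sigma_2 + O(\|\delta_n\|^3)$, which is the first $\sim$.

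For the second $\sim$ I would invoke Newton's relation $\|\delta_n\|^2 = \sigma_1^2 - 2\sigma_2$; combined with $\sigma_1^2 = O(\|\delta_n\|^4)$ this yields $\sigma_2 = -\tfrac{1}{2}\|\delta_n\|^2 + O(\|\delta_n\|^4)$, hence $\sigma_2 \sim -\tfrac{1}{2}\|\delta_n\|^2$ and consequently $\sigma_1 \sim \sigma_2/3 \sim -\|\delta_n\|^2/6$. The only book-keeping point is that the $\sim$ notation requires $\|\delta_n\|>0$ for large $n$; this is guaranteed by the standing assumption that $\mathcal{T}_0$ is not equilateral, together with the strict monotonicity $s_{n+1}>s_n$ established earlier (valid as long as $s_n<9$ and $u_n>0$, both preserved under the iteration). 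The main obstacle is thus confined to spotting the bootstrap; everything else is straightforward algebra.
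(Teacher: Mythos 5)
Your proof is correct and follows essentially the same route as the paper: both reduce the circumradius condition $u_n=4t_n-s_n^2$ to the identity $3\sigma_1+\sigma_1^2=\sigma_2-\sigma_3$ in the elementary symmetric functions of $(h_n,k_n,l_n)$, upgrade $\sigma_1=O(\|\delta_n\|)$ to $\sigma_1=O(\|\delta_n\|^2)$, and finish with the Newton relation $\|\delta_n\|^2=\sigma_1^2-2\sigma_2$. The only difference is cosmetic: you spell out the bootstrap (via $|3+\sigma_1|\geq 2$) and the need for $\|\delta_n\|>0$, both of which the paper leaves implicit in its ``Thus''.
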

\begin{prop}\label{ordre_conv_triangle}
If $G_n$ is the centroid of $\mathcal{T}_n$ then $OG_{n+1}\sim{OG_n}^2$ when $n$ tends to $\infty$. Thus the sequence $(OG_n)$ converges with order $2$.
\end{prop}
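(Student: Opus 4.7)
The plan is to leverage the preceding lemma to uncover a ``double-zero'' cancellation in the iteration formula for $s_n = a_n^2 + b_n^2 + c_n^2$. First, observe that $OG_n^2 = 1 - s_n/9 = -(h_n+k_n+l_n)/9$, so the lemma immediately gives $OG_n^2 \sim \|\delta_n\|^2/54$. Consequently the target $OG_{n+1} \sim OG_n^2$ is equivalent to showing $9 - s_{n+1} \sim \|\delta_n\|^4/324$, which is what I would aim to establish.

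Next, I would use the explicit formulas of Section 6.3: $s_{n+1} = s_n^2(6t_n - s_n^2)/D_n$ with $D_n = -4s_n^3 + 18 s_n t_n - 108 t_n + 27 s_n^2$. A direct polynomial division gives the factorization
\[
9 - s_{n+1} = \frac{(s_n - 9)\, g(s_n, t_n)}{D_n}, \qquad g(s,t) := s^2(s-27) - 6t(s-18),
\]
and one verifies $g(9,27) = 0$. Linearizing $g$ at the fixed point yields $g(s_n,t_n) = -405\, H_s + 54\, H_t + O((H_s,H_t)^2)$ with $H_s = s_n - 9$, $H_t = t_n - 27$. Now the key step: from $t_n = 27 + 6(h_n+k_n+l_n) + p_{2,n}$, where $p_{2,n} := h_n k_n + k_n l_n + l_n h_n$, we have $H_t = 6 H_s + p_{2,n}$, so $-405\, H_s$ is almost compensated by $54 \cdot 6\, H_s = 324\, H_s$, leaving $g(s_n,t_n) \sim -81\, H_s + 54\, p_{2,n}$.

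Applying the lemma a second time, $H_s \sim -\|\delta_n\|^2/6$ and, through the identity $p_{2,n} = \tfrac12(H_s^2 - \|\delta_n\|^2)$, also $p_{2,n} \sim -\|\delta_n\|^2/2$. Both contributions are of order $\|\delta_n\|^2$ and combine to $g(s_n,t_n) \sim -\tfrac{27}{2}\|\delta_n\|^2$. Multiplying by $H_s$ and dividing by $D_n \to 729$ yields $9 - s_{n+1} \sim \|\delta_n\|^4/324$, hence $OG_{n+1}^2 \sim \|\delta_n\|^4/2916 \sim OG_n^4$, which is exactly $OG_{n+1} \sim OG_n^2$.

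The main obstacle is the requirement of a genuinely \emph{double} cancellation: beyond the obvious factoring of $(s-9)$ from the numerator (which encodes the fact that the equilateral triangle is a fixed point), the cofactor $g$ must also vanish at $(9,27)$, and then the constraint $H_t = 6 H_s + p_{2,n}$ must collapse the remaining linear $H_s$-contribution (via $-405 + 324 = -81$) so that the whole bracket is only $O(\|\delta_n\|^2)$. Without this second cancellation one would obtain merely super-linear, not quadratic, convergence. Once the structural factorization is spotted, the underlying polynomial identities can be verified mechanically with the Gröbner-basis software used elsewhere in the paper.
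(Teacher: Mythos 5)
Your proposal is correct: I checked the factorization $9-s_{n+1}=(s_n-9)\,g(s_n,t_n)/D_n$ with $g(s,t)=s^3-27s^2-6ts+108t$, the values $g(9,27)=0$, $g_s(9,27)=-405$, $g_t(9,27)=54$, $D_n\to 729$, and the resulting chain $g(s_n,t_n)\sim-\tfrac{27}{2}\|\delta_n\|^2$, $9-s_{n+1}\sim\|\delta_n\|^4/324$, $OG_{n+1}^2\sim\|\delta_n\|^4/2916\sim OG_n^4$; all agree with the paper's conclusion. The route differs from the paper's in execution rather than in substance: both arguments hinge on the same key lemma (that the circumradius relation $u=4t-s^2$ forces $h_n+k_n+l_n\sim\tfrac13(h_nk_n+k_nl_nl+l_nh_n)\sim-\tfrac16\|\delta_n\|^2$, so the first-order term collapses), but the paper simply has Maple expand $OG_{n+1}^2$ as a Taylor series in $(h_n,k_n,l_n)$ to order $5$ and then substitutes the lemma into the resulting $N_n$, whereas you work at the level of the symmetric functions $(s,t)$, exhibit the exact polynomial factorization, and make the double cancellation ($g$ vanishing at the fixed point, then $-405+54\cdot 6=-81$ under the constraint $H_t=6H_s+p_{2,n}$) visible by hand. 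Your version is more structural and verifiable without computer algebra; the paper's is shorter to state but hides where the quadratic order actually comes from. One shared caveat, not a flaw you introduced: both arguments tacitly assume no $\mathcal{T}_n$ is exactly equilateral (otherwise $OG_m=0$ for all $m\geq n$ and the relation $OG_{n+1}\sim OG_n^2$ is vacuous), which is consistent with the paper's standing hypothesis on $\mathcal{T}_0$ but is not literally guaranteed by it since $\phi$ is not injective.
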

\begin{proof}
 ${OG_n}^2=\dfrac{h_n+k_n+l_n}{-9}$; ${OG_{n+1}}^2=1-\dfrac{s_{n+1}}{9}$. Using Maple and Lemma 1, we obtain the Taylor series of ${OG_{n+1}}^2$ with the precision  $O(||{\delta_n}||^5)$:\\
${OG_{n+1}}^2=\dfrac{N_n}{-81^2}$ where\\
 $N_n=81(h_n+k_n+l_n)^2+18(h_n+k_n+l_n)({2h_n}^2+2{k_n}^2+2{l_n}^2+h_nk_n+k_nl_n+l_nh_n)+O(||{\delta_n}||^5)\\ \phantom{N_n} = 27(h_n+k_n+l_n)(3(h_n+k_n+l_n)-2(h_nk_n+k_nl_n+l_nh_n))+O(||{\delta_n}||^5)$\\
$\sim{-81(h_n+k_n+l_n)^2}$.
 Finally ${OG_{n+1}}^2\sim\dfrac{(h_n+k_n+l_n)^2}{81}={OG_n}^4$.
\end{proof}
\begin{rem}
$\bullet$ We can deduce that there exists $\lambda\in(0,1)$, that depends upon $a,b,c$, such that $OG_n\sim\lambda^{2^n}$. If $\mathcal{T}_0$ is close to a flat triangle then $\lambda$ is close to $1$. Of course if $\mathcal{T}_0$ is close to an equilateral triangle then $\lambda$ is close to $0$ (see $\mathcal{T}_0$ as the result of a large number of iterations.)\\
$\bullet$ The result obtained in~\cite{9}: $OG_{n+1}=O({OG_n}^2)$ is weaker and does not give the preceding estimate of $OG_n$.
\end{rem}

\subsection {The main result in dimension two}
\begin{thm}The sequences of triangles $(\mathcal{T}_{2i})_{i\in\mathbb{N}}$ and $(\mathcal{T}_{2i+1})_{i\in\mathbb{N}}$ are well defined and converge with at least quadratic speed to two equilateral triangles that are symmetric with respect to $O$.
\end{thm}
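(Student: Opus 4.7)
The plan is to follow exactly the three-step strategy already used in Theorem~\ref{convergence_planar_tetrahedron}, now exploiting the much stronger rate $OG_{n+1}\sim OG_n^2$ of Proposition~\ref{ordre_conv_triangle}. This estimate alone gives doubly exponential decay of $OG_n$, so $\sum_n OG_n<\infty$. Combined with the preceding work, I may also take for granted that $(\mathcal{T}_n)$ is well defined for every $n$, that every cluster point of $(\mathcal{T}_n)$ is a non-flat equilateral triangle, and that the edge lengths $a_n,b_n,c_n$ all converge to $\sqrt 3$.

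The core geometric step is the midpoint-of-chord argument. Since $A_{n+1}$ is by construction the second intersection of $(G_nA_n)$ with $\mathcal{S}$, the centroid $G_n$ lies on the chord $[A_nA_{n+1}]$, whose midpoint $M_n$ is the foot of the perpendicular from $O$; hence $OM_n\leq OG_n$ and $G_nM_n\leq 2\,OG_n$. From $M_{n+1}-M_n=\tfrac12(A_{n+2}-A_n)$ the triangle inequality yields, for every sufficiently large $n$,
\[
A_nA_{n+2}\;\leq\;2\bigl(G_nM_n+G_nG_{n+1}+G_{n+1}M_{n+1}\bigr)\;\leq\;C\cdot OG_n,
\]
for some absolute constant $C$, and the same bound holds for $B$ and $C$. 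Convergence of $\sum_n OG_{2n}$ then makes $(A_{2n})_n,(B_{2n})_n,(C_{2n})_n$ Cauchy in $\mathbb{R}^2$, giving limits that form a triangle $\mathcal{T}^\infty$; the analogous argument for odd indices produces $\mathcal{T}'^\infty$.

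To identify these limits I would pass to the limit in $G_n\to O$ and in the edge length estimates: both $\mathcal{T}^\infty$ and $\mathcal{T}'^\infty$ have centroid $O$ and edges of length $\sqrt 3$, hence are equilateral. Continuity of $\phi$ gives $\mathcal{T}'^\infty=\phi(\mathcal{T}^\infty)$, and a one-line check shows that $\phi$ sends an equilateral triangle centred at $O$ to its central reflection through $O$ (each radius $OA$ meets $\mathcal{S}$ again at $-A$), so the two limit triangles are symmetric with respect to $O$. For the speed, the bound above gives $|A_{2n}-A^\infty|\leq C\sum_{k\geq 2n}OG_k=O(OG_{2n})$, and combined with $OG_{2n+2}\sim OG_{2n}^4$ this produces at least quadratic (in fact quartic) convergence of each subsequence. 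The only genuinely subtle point, which the midpoint-of-chord argument handles cleanly, is preventing the $\mathcal{T}_n$ from spinning around $O$ as $G_n$ collapses onto $O$; without that geometric bookkeeping one would only extract subsequential cluster points, and this is where I expect the main (modest) obstacle to lie.
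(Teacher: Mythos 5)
Your proposal is correct and follows essentially the same route as the paper: convergence of $\sum_n OG_n$ from the quadratic estimate, the midpoint-of-chord bound on $A_nA_{n+2}$ to rule out rotation around $O$ and get Cauchy subsequences, and $A_{2n}A^\infty=O(OG_{2n})$ for the speed. Your explicit inequality $A_nA_{n+2}\le 2(G_nM_n+G_nG_{n+1}+G_{n+1}M_{n+1})\le C\cdot OG_n$ is in fact a cleaner quantitative version of the paper's looser claim that $A_nA_{n+2}\le 3\,G_nG_{n+1}$ because the centroids are ``close to the midpoints'' of the chords.
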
  

\begin{proof}
For all sufficiently large $n$, $G_n$ and $G_{n+1}$ are close to $O$ then close to the middle points of the segments $[A_nA_{n+1}]$ and $[A_{n+1}A_{n+2}]$; thus for all sufficiently large $n$, $A_nA_{n+2}\leq{3}G_nG_{n+1}$. Let $p\in\mathbb{N}^*$; $A_nA_{n+2p}
% \leq{3}(G_nG_{n+1}+G_{n+2}G_{n+3}+\cdots+G_{n+2p-2}G_{n+2p-1})\\
\leq{3}\sum_{k=n}^{n+2p-2}G_kG_{k+1}\leq{3}\sum_{k=n}^{n+2p-2}(OG_k+OG_{k+1})\leq{6}\sum_{k=n}^{n+2p}OG_k$. The series $\sum{OG_n}$ converges, then $(A_{2n})_n$ is a Cauchy sequence; thus it converges to $A^\infty$. By the same way $(A_{2n+1})_n$ converges to $A'^\infty$, the symmetric of $A^\infty$ with respect to $O$.

Moreover $A_{2n}A^\infty\leq{6}\sum_{k=2n}^\infty{OG_k}\leq{12}\times{OG_{2n}}$ for all sufficiently large $n$; therefore $A_{2n}A^\infty=O(\lambda^{2^{2n}})$ and the sequences $(\mathcal{T}_{2i})$ and $(\mathcal{T}_{2i+1})$ converge with at least quadratic speed.
\end{proof} 
%//////////////////////////////////////////////////////////////////////////////////////////////////////////////////////////////////////////////////
    \section{Conclusion}
    We mention some questions we can ask about these dynamical systems. 
\begin{enumerate}[i)]
 \item  Can one generalize our results concerning the $d$-simplices for $d>3$? We note that the complexity of the computations increases very quickly.
 \item   What occurs if $\mathcal{T}_0$ is a degenerate $d$-simplex that admits a circumsphere in $\mathbb{R}^e$ with $e<d$?  For example, $\mathcal{T}_0$ could be a convex polyhedron that admits a circumsphere in $\mathbb{R}^3$. 
 \item  More generally what occurs if we replace the centroid of $\mathcal{T}_i$ with some barycenter of the vertices of $\mathcal{T}_i$?  
\end{enumerate}

\bibliographystyle{plain}
\bibliography{ArxivTetralte2}

\begin{thebibliography}{10}

\bibitem{1}
N.~Altshiller-Court.
\newblock {\em {College geometry. An introduction to the modern geometry of the
  triangle and the circle. 2. ed.}}
\newblock {New York: Barnes and Noble 1952. XIX, 313 p. }, 1952.

\bibitem{2}
N.~Altshiller-Court.
\newblock {\em {Modern pure solid geometry. 2nd ed.}}
\newblock {Bronx, NY: Chelsea Publishing Company. XIV, 353 p. }, 1964.

\bibitem{6}
M.~Berger.
\newblock {\em Geometry. {I}}.
\newblock Universitext. Springer-Verlag, Berlin, 1994.
\newblock Translated from the 1977 French original by M. Cole and S. Levy,
  Corrected reprint of the 1987 translation.

\bibitem{9}
G.~Bourgeois and J.-P. Lech{\^e}ne.
\newblock {Etude d'une it{\'e}ration en g{\'e}om{\'e}trie du triangle.}
\newblock {\em Bulletin de l'APMEP}, 1(409):147--154, 1997.

\bibitem{7}
H.~S.~M. Coxeter and S.~L. Greitzer.
\newblock {\em Revisited Geometry}.
\newblock Math. Assoc. Amer., Washington, DC, 1967.

\bibitem{10}
A.~Edmonds.
\newblock private communication, 2009.

\bibitem{8ter}
J.-C. Faug{\`e}re.
\newblock The FGb software can be downloaded from the website:\\
  \url{http://www-calfor.lip6.fr/~jcf/}. It is consistent with Maple 10 and is
  included in Maple 11.

\bibitem{3}
V.~Klee and S.~Wagon.
\newblock {\em {Old and new unsolved problems in plane geometry and number
  theory. (Alte und neue ungel{\"o}ste Probleme in der Zahlentheorie und
  Geometrie der Ebene. {\"U}bers. aus dem Amerik. von Manfred Stern.)}}.
\newblock {Basel: Birkh{\"a}user. xiii}, 1997.

\bibitem{Maple}
Maplesoft.
\newblock {\em Maple User Manual}.
\newblock Waterloo Maple Inc., Waterloo, Canada, 2008.
\newblock See \url{http://www.maplesoft.com/products/maple/}.

\bibitem{5}
D.~S. Mitrinovi{\'c}, J.~E. Pe\v{c}ari{\'c}, and V.~Volenec.
\newblock {\em Recent advances in geometric inequalities}, volume~28 of {\em
  Mathematics and its Applications (East European Series)}.
\newblock Kluwer Academic Publishers Group, Dordrecht, 1989.

\bibitem{4}
V.~Thebault.
\newblock {\em {Parmi les plus belles figures de la g{\'e}om{\'e}trie dans
  l'espace. G{\'e}om{\'e}trie du t{\'e}tra{\`e}dre.}}
\newblock {Vuibert}, 1955 (In French).

\end{thebibliography}
% 
% \begin{thebibliography}{99}
% 
% \bibitem[1]{1}N. Altschiller-Court. College Geometry: An introduction to the Modern Geometry of the Triangle and the Circle. Barnes and Noble, 1952.
% 
% \bibitem[2]{2} N. Altschiller-Court. Modern pure solid geometry. Second edition, Chelsea New York, 1979.
% 
% \bibitem[3]{3}V. Klee and S. Wagon. Old and New unsolved problems in Plane Geometry and Number Theory. Washington DC: Mathematical Association of America. 1991.
% 
% \bibitem[4]{4}
% V. Thebault. Parmi les plus belles figures de la g\'eom\'etrie dans l'espace. G\'eom\'etrie du t\'etra\`edre. Vuibert, 1955. (In French).
% 
% \bibitem[5]{5}
% D. Mitrinovic, J. Pecaric, V. Volenec. Recent Advances in Geometric Inequalities. Springer, 1989 / Inequalities for tetrahedra, p. 545-575.
% 
% \bibitem[6]{6}
% M. Berger. Geometry I, Universitext, Springer, 1994.
% 
% \bibitem[7]{7}
% H. Coxeter, S. Greitzer. Geometry Revisited. Washington, DC: Math. Assoc. Amer., 1967.
% 
% \bibitem[8]{8}
% J.C. Faugere. LIP6, Paris 6: the FGb software can be downloaded from the website:\\
% \textsf{www-calfor.lip6.fr/$\thicksim$ jcf/}. It is consistent with Maple 10 and is included in Maple 11.
% 
% \bibitem[9]{9}
% G. Bourgeois, J.P. Lechene. Bulletin APMEP 409, April-May 1997, p. 147-154. (In French).
% 
% \bibitem[10]{10}
% A. Edmonds. Private communication.
% 
%  \end{thebibliography}

\end{document}